\documentclass[preprint]{imsart}

\usepackage{amsfonts}
\usepackage{amsmath,bm}
\usepackage{mathtools,mathrsfs,nicefrac,graphicx}
\usepackage{amsthm,nicefrac,bigints}
\usepackage[utf8]{inputenc}
\usepackage[T1]{fontenc}
\usepackage{epsfig}
\usepackage{bigints}
\usepackage{multirow}
\usepackage{eurosym}
\usepackage{enumerate}
\usepackage{enumitem}
\usepackage{bbm,bm}
\usepackage{xcolor}

\usepackage{array}
\newcolumntype{L}[1]{>{\raggedright\let\newline\\\arraybackslash\hspace{0pt}}m{#1}}
\newcolumntype{C}[1]{>{\centering\let\newline\\\arraybackslash\hspace{0pt}}m{#1}}
\newcolumntype{R}[1]{>{\raggedleft\let\newline\\\arraybackslash\hspace{0pt}}m{#1}}

\usepackage{float}
\usepackage{url}
\usepackage{hyperref}
\usepackage{marginnote}

\newtheorem{theorem}{Theorem} 
\newtheorem{proposition}[theorem]{Proposition}

\newtheorem*{theorem*}{Theorem}

\theoremstyle{definition}\newtheorem{remark}[theorem]{Remark}

\newcommand{\bN}{\mathbb{N}}
\newcommand{\bP}{\mathbb{P}}

\newcommand{\bR}{\mathbb{R}}

\newcommand{\bG}{\mathbb{G}}
\newcommand{\bS}{\mathbb{S}}

\newcommand{\cH}{\mathcal{H}}

\newcommand{\cL}{\mathcal{L}}

\newcommand{\cP}{\mathcal{P}}

\newcommand{\cW}{\mathcal{W}}

\newcommand{\Aut}{\text{\rm Aut}}
\newcommand{\aut}{\text{\rm aut}}
\newcommand{\Ber}{\text{\rm Ber}}
\newcommand{\ER}{\text{\rm ER}}

\newcommand{\trace}{\text{\rm trace}}

\newcommand{\var}{\text{\rm var}}

\newcommand{\Bin}{\text{\rm Bin}}

\newcommand{\toas}{\to_{\text{\rm\tiny a.s.}}} 
\newcommand{\todistr}{\to_{\text{\rm\tiny d}}}

\newcommand{\toP}{\to_{\text{\rm\tiny P}}}

\newcommand{\ul}{^{\text{\rm\tiny ul}}}
\newcommand{\bc}{^{\text{\rm\tiny bc}}}

\newcommand{\brend}{\hfill $\triangleleft$} 

\allowdisplaybreaks

\begin{document}

\begin{frontmatter}

\title{Tests of graph homogeneity, subgraph counts, and quasirandomness}
\runtitle{Tests of graph homogeneity}

\begin{aug}

\author{\fnms{Rudolf} \snm{Gr{\"u}bel}\corref{}
        \ead[label=e2]{rgrubel@stochastik.uni-hannover.de}}
       \address[]{Institute of Actuarial and Financial Mathematics,
          Leibniz Universit\"at Hannover\\ Welfengarten 1,          
          D-30167 Hannover, Germany \printead{e2}}
\runauthor{R. Gr{\"u}bel}
\end{aug}

\begin{abstract}

Homogeneous random graphs, also known as Erd{\H os}-R\'enyi 
graphs, are a subset of the family of dense random graphs, specified by a graphon. 
We analyze several goodness-of-fit tests for these models  that are based on subgraph counts. 
To obtain the limiting null distribution of the test statistics we use a decomposition 
of graph functionals, which reveals a cancellation effect. Motivated by a quasirandomness result we
obtain a test that is consistent against all alternatives,, and we
use two popular parametric subfamilies to evaluate the other tests. 
The theoretical results refer to the limit $n\to \infty$ of the size $n$ of the graph,
the behavior for finite $n$ is illustrated by simulations.  
\end{abstract}

\begin{keyword}[class=MSC]
\kwd{62G10, 05C80, 60F05, 62E10}
\end{keyword}

\begin{keyword}
\kwd{Asymptotic normality}
\kwd{bias correction}
\kwd{consistency}
\kwd{graph functionals}
\kwd{graphon}
\kwd{quasirandomness}
\kwd{similarity of tests}
\kwd{stochastic block model}
\kwd{subgraph frequency}
\end{keyword}

\end{frontmatter}

\date{\today}


\section{Introduction}\label{sec:intro}
Let $(G_n)_{n\in\bN}$ be a sequence of random graphs with a distribution that is  param\-etrized by a 
graphon $W$; see~\cite{Lovasz} and Section~\ref{subsec:graphon} below. 
If $W$ is constant then we obtain the Erd{\H o}s-R\'enyi graphs,  where each of the $n(n-1)/2$ potential edges 
of a simple graph with $n$ nodes is included in its edge set independently and with the same probability $p$.
This may be seen as a form of homogeneity. In this paper we consider tests of the compound hypothesis 
that $W\equiv p$ almost surely for some $p\in (0,1)$.  
Such procedures are also known as goodness-of-fit tests, and as omnibus tests if they are consistent
against all alternatives.

Our approach to this problem is inspired by omnibus tests of independence
\cite{Hoeffding,Yana, BergsmaDassios} that are based on permutation patterns in rank plots. 
Such tests can be
related to the concept of quasirandomness. For permutations the corresponding results in~\cite{Chan}
led to the tests analyzed in~\cite{BaGrEJS3}; see also Section~\ref{sec:conclusion}. Here we similarly 
use a famous quasirandomness result for graphs from~\cite{CGW} to obtain a consistent test 
for graph homogeneity.

Subgraph counts are graph functionals, meaning that they are
invariant under node relabeling. For permutation tests of independence the asymptotic null 
distributions of the test statistics motivated by~\cite{Chan}  have been obtained in~\cite{BaGrEJS3}  
via the classical Hoeffding
decomposition for symmetric functions.  Here we use the fact that graph functionals form an 
algebra, together with  a graph version~\cite[Chap. 6]{JLR}  of the Hoeffding decomposition, 
to expose a cancellation effect that is the basis for our distributional results. 
Homogeneous graphs are built on a family of $n(n-1)/2$ independent random variables so that 
the standard distributional rate for e.g.\ subgraph counts is $n^{-1}$. Loosely speaking, 
the tests considered here are first order degenerate, with associated rates $n^{-3/2}$.

Graphs and networks have been a major topic in theoretical and applied research for many
years, with the amount of relevant literature increasing rapidly. Several excellent introductions,
including discussions of the connections to various fields, can be found in recent textbooks and 
research monographs, ranging from number theory~\cite{Zhao} to theoretical physics 
to the social sciences~\cite{Lovasz};  see also~\cite{BFJ} for a discussion 
of the applied relevance of graph statistics and the use of subgraph counts.
This area connects discrete mathematics, theoretical computer science, and
probability theory and statistics. In applications the  Erd{\H o}s-R\'enyi model,
or indeed dense graphs altogether, are often regarded as unrealistic as the number of edges
grows as the square of the number of vertices. This motivates models where $p$ varies with $n$,
see~\cite{Oua} and~\cite{BFJ}.
However, the case with $p$ fixed continues to be at the center of theoretical
advances; see e.g.\ the recent developments in large deviation theory~\cite{Chatter}.
The theoretical results are often on the level of the law of large numbers. For statistical tests, 
`second order results', essentially the transition to a central limit theorem,
are needed.  Here~\cite{JLR} is a standard reference, which mainly treats the Erd{\H o}s-R\'enyi
model with the extension that the parameter $p$ may depend on the size $n$ of the graph;
see also the discussion in~\cite{KaurRoellin}. 

Below we first fix some basic notation and collect some important tools. In Section~\ref{sec:results} we 
introduce several tests that are based on (nonlinear) combinations of counts of edges, of paths of length two, 
of  triangles, and of circles of circumference four, and we obtain the respective limiting null distributions. Theorem~\ref{thm:mainC4} 
presents an omnibus test. In Section~\ref{sec:altpar} we consider two popular parametric families, stochastic 
block models and models with a linear band graphon, with a view towards consistency of the other 
tests against a restricted set of alternatives.  In Section~\ref{sec:sim} we show the results of some 
simulations with finite sample size, and with emphasis on alternatives that are related to consistency 
properties of the tests. 
In the final section we give a very brief general outline, with hints to potential further research topics.

\section{Background and tools}\label{sec:background}
We first collect some notation from graph theory, see e.g.\ \cite[Section 1.2]{JLR}.
Next we recall some aspects of the limit theory of dense graphs, see the standard monograph~\cite{Lovasz}.
We then discuss the quasirandomness  result that is the basis for one of our tests, 
see e.g.~\cite[Chap.\ 3]{Zhao}. In the last subsection
we consider distributional limits in the Erd{\H o}s-R\'enyi case, following the approach in \cite[Chapter 6]{JLR} 
and \cite[Chapter 11]{JansonGHS}, which is based on an analogue for graph functionals
of the classical Hoeffding decomposition.

\subsection{Graphs}\label{subsec:graphs}
A (finite simple) graph is a pair $G=(V(G),E(G))$ of sets, with $V(G)$ the finite set of vertices of 
$G$ and $E(G)$ its edges, where $E(G)$ is a set of size two subsets of $V(G)$.  
We sometimes abbreviate $e\in E(G)$ to $e\in G$, and we write $v(G)=\# V(G)$ and $e(G)=\# E(G)$ 
for the number of vertices and edges of $G$.
Let $\bG(V)$ be the set of all such graphs with vertex set $V$, with $\bG(V)=\bG_n$ 
if $V=[n]:=\{1,\ldots, n\}$, $n\in\bN$, and $\bG :=\bigcup_{n\in\bN}\bG_n$. We use some standard 
abbreviations for specific graphs: $K_j$ is the complete graph on $j$ nodes, $P_j\in\bG_j$ is the path with edges
$\{1,2\},\ldots,\{j-1,j\}$,
$C_j\in\bG_j$ is the circle with circumference $j\ge 3$,
and $S_j\in\bG_j$ is the star with edges $\{ 1,i \}$, $i=2,3,\ldots,j$. 
In particular,  $K_2=P_2$ consists of a single edge, $K_3=C_3$ is known as the triangle, and 
$C_4:= ([4],\{\{1,2\},\{2,3\}, \{3,4\},\{1,4\}\})\in\bG_4$. Finally, we write $K_2+K_2$ for the graph 
on four nodes with two non-adjacent edges, and use the obvious extension to the disjoint union of 
other graphs, or to more than two summands.

The group $\bS(V)$ of all bijective functions (permutations) $\pi:V\to V$ acts on $\bG(V)$ via 
$\pi . G= (V(G), \bigl\{ \{\pi(v),\pi(w)\}:\, \{v,w\}\in E(G)\bigr\}\bigr)$. The automorphism  group 
$\Aut(G)$ with size $\aut(G)$ consists of all $\pi\in\bS(V)$ such that $\pi .G=G$;
for example, $\aut(K_2)=2$ and $\aut(C_4)=8$. 
The orbit of $G\in\bG(V)$ is the set $G\ul:=\{\pi.G:\, \pi\in\bS(V)\}$. These partition $\bG(V)$ into 
subsets of isomorphic graphs, and are commonly referred to as \emph{unlabeled graphs}.
By a (real-valued) \emph{graph functional} we mean a function $\Psi:\bG\to\bR$ that does not depend 
on the labeling, so that $\Psi(\pi.G)=\Psi(G)$ whenever $G\in \bG_n$ and $\pi\in\bS_n$ for some $n\in\bN$.

In analogy to the pattern-based independence tests in~\cite{BaGrEJS3} our tests will use subgraph counts.   
A graph $H=(V(H),E(H))$ is a \emph{subgraph} of $G$ if $V(H)\subset V(G)$ and $E(H)\subset E(G)$, 
so that the subgraph relation is edge-preserving;  we abbreviate this to $H\subset G$. 
Let $I(k,n)$ be the set of injective (one-to-one) functions $\phi:[k]\to [n]$, $n\ge k$, which we may write as
\begin{equation*}
 I(k,n)\, = \, \bigl\{ (i_1,\ldots,i_k)\in [n]:\, i_j\not=i_l\text{ if } j\not=l  \bigr\}.
\end{equation*}
We have $\# I(k,n) = (n)_k:= n(n-1)\cdots (n-k+1)$. 
For $\phi\in I(k,n)$ and $H\in \bG_k$ let $\phi(H)$ be the graph with vertices $\phi(i)$, $i\in [k]$, 
and edges $ \{\phi(i),\phi(j)\}$, $\{i,j\}\in E(H)$. Given $G\in\bG_n$ and $H\in\bG_k$, $k\le n$, let
\begin{equation*}
    t(H,G):= \frac{1}{(n)_k}\,  A(H,G),  \text{ with } A(H,G):= \# \{\phi\in I(k,n):\, \phi(H)\subset G\}.
\end{equation*}
Occasionally we write $A_H$ for the function $G\mapsto A(H,G)$, $G\in\bG$. 
Note that $A(H,G)$ counts the occurrences of $H$ in $G$ with multiplicity $\aut(H)$.
We may interpret $t(H,G)$ as the probability that, when sampling $k$ vertices from $G$ without replacement
by choosing some $\phi\in I(k,n)$ uniformly at random,   
we obtain $\phi(H)\subset G$.  Clearly, these are graph functionals. 
For example, $t(K_2,C_4)= 2/3$. 
In fact, we always have $t(K_2,G) = 2 e(G)/(v(G) (v(G)-1))$, which is the edge density  of $G$. 

Finally, we recall that a simple graph $G=(V,E)$ may be described by its adjacency matrix 
$M=M(G)\in \{0,1\}^{v(G)\times v(G)}$, which has entries 1 in row $i$ and column $j$ if $\{i,j\}\in E$, and 0 otherwise.

\subsection{Random graphs} \label{subsec:graphon}
A \emph{graphon} is a measurable and symmetric function $W:[0,1]^2\to [0,1]$. By the
\emph{graphon construction} we mean a specific procedure that generates a sequence 
$(G_n)_{n\in\bN}$ of random graphs, with $G_n\in\bG_n$ for all $n\in\bN$. For this,  let $U_i$, $i\in \bN$,
and $Y_e$, $e\in E_\infty  := \{\{i,j\}\in\bN:\, 1\le i < j<\infty\}$, be independent random variables, all
uniformly distributed on the unit interval. 
Then the (random) edge sets of the graphs $G_n$ are defined by
$E(G_n) := \bigl\{\{i,j\}\in E_\infty:\, 1\le i <j\le n,\, Y_{\{i,j\}}\le W(U_i,U_j)\bigr\}$, $n\in\bN$.
The parametrization of the distribution of $(G_n)_{n\in\bN}$ 
by $W$ is not injective: Two graphons $W_1$ and $W_2$ 
that differ only on a set of measure zero (with respect to the uniform distribution on the unit square) 
will lead to the same distribution. This also holds  if $W_2(x,y)=W_1(\psi(x),\psi(y)))$ with some measurable
$\psi:[0,1]\to[0,1]$ that leaves the uniform distribution invariant. We will mainly ignore this and write 
$\cW$ for the set of graphons.
Note that there are two steps, and
that we could as well use a graphon $W$ and a sequence $(u_i)_{i\in\bN}$ of pairwise 
different elements of the open unit interval as parameters and then proceed to regard the above 
as a probabilistic mixture of this construction.  


A central result is the following: 
If $(G_n)_{n\in\bN}$ is a sequence of graphs generated from~$W$, then all sampling probabilities
converge almost surely, and the respective limits can be expressed in terms of $W\!$: \ 
For all $k\in\bN$ and $H\in\bG_k$,
\begin{equation}\label{eq:Wconv}
	t(H,G_n)\toas t(H,W)\; 
	:=\ \int_0^1\!\!\cdots\!\! \int_0^1\prod_{\{u_i,u_j\}\in E(H)}W(u_i,u_j)\, du_1\cdots du_k
\end{equation}
as $n\to\infty$. The random variables are non-negative and bounded by 1, hence the convergence also 
holds for the respective moments.

If the graphon is constant and equal to $p$ for some $p\in[0,1]$, then we obtain
the  Erd{\H o}s-R\'enyi (or Gilbert or binomial) random graphs, where the potential edges are independently 
included with probability $p$. We abbreviate this to $G_n\sim \ER(n,p)$. In the Erd{\H o}s-R\'enyi case, i.e.\ for constant graphons, the $U$-variables are not needed in the above graphon construction. 
This has the obvious interpretation as a graph variant of homogeneity. If $W$ only takes the values 0 and 1 
then the graphs are functions of the $U$-variables alone, and with a graphon that is piecewise constant 
on a partition of the unit square into rectangles we obtain the widely used stochastic block models;
see also Section~\ref{sec:altpar} below.

\subsection{Quasirandomness} \label{subsec:quasi}
In the Erd{\H o}s-R\'enyi case, the general formula \eqref{eq:Wconv} leads to the limits $p$ 
for $K_2$ and $p^4$ for $C_4$ so that
\begin{equation}\label{eq:ERequ}
	t(C_4,W)=t(K_2,W)^4 \quad\text{ if }\ W\equiv p\ \text{ a.s. for some } p\in[0,1]. 
\end{equation}
In terms of dense graph limits a famous result of Chung, Graham and Wilson~\cite{CGW} states that 
\begin{equation}\label{eq:ERinequ}
	t(C_4,W)\ge t(K_2,W)^4 \ \text{ for all graphons }\ W,
\end{equation}
with equality only in the case $W\equiv p\,$ a.s.\ for some $p\in[0,1]$. 
This result thus provides a characterization of the  Erd{\H o}s-R\'enyi class, or of the almost surely constant
graphons in $\cW$ via iterated integrals.
In view of its importance for the present paper we give the first step of a proof.
Starting with the definition of $t(H,W)$ 
in~\eqref{eq:Wconv} and using the Cauchy-Schwarz inequality we obtain
\begin{align*}
	t(C_4,W) \, &=  \int_0^1\int_0^1\int_0^1\int_0^1 W(u,v)W(v,w)W(w,x)W(x,u)
	                               \, du\, dv\, dw \, dx\\
	                     &= \int_0^1\!\!\int_0^1\Bigl(\int_0^1 W(u,x)W(w,x)\, du_4\Bigr)
	                                                          \Bigl(\int_0^1 W(u,y)W(w,y)\, du_2\Bigr)
	                                                          \, du\, dw \\
	                      &=     \int_0^1\int_0^1\biggl(\int_0^1 W(u_1,v)W(u_3,v)\, dv\biggr)^2 \, du_1\, du_3 \\ 
	                      &\ge \,    \biggl( \int_0^1\int_0^1\int_0^1 W(u_1,v)W(u_3,v)\, dv
	                                                          \, du_1\,  du_3\biggr)^2\quad
	                       =\ \ t(P_3,W)^2,                                
\end{align*}
where $P_3\in\bG_3$ is the path of length two, with edges $E(P_3)=\{\{1,2\},\{2,3\}\}$. Similarly,
\begin{align*}
	t(P_3,W) \, 
	                     &=\, \int_0^1\int_0^1\int_0^1 W(u,v) W(v,w)\, du\, dv\, dw \\
	                      &=\,  \int_0^1\biggl(\int_0^1 W(u,v)\, du\biggr)^2 \, dv \\ 
	                      &\ge \   \biggl( \int_0^1 W(u,v)\,du\, dv\biggr)^2\quad
	                       =\ \ t(K_2,W)^2,                                
\end{align*}
Taken together this gives~\eqref{eq:ERinequ}. Further, if equality holds 
in~\eqref{eq:ERinequ} then the conditions for equality in the
Cauchy-Schwarz inequality can be used to show that $W$ is almost surely constant. 


\subsection{The algebra of graph functionals}\label{subsec:decomp}

Any graph  $G\in\bG$ may be described, up to isomorphism, by subgraph counting, i.e.\! by the functions $H\mapsto A(H,G)$, $H\in\bG$.
This coding of graphs leads to a description of the linear space of graph functionals,
a space that is even an algebra, where multiplication refers to the pointwise multiplication of functions. 
This implies that any product of graph functionals can be written as a linear combination of subgraph counts. 
For example,
\begin{equation}\label{eq:p3k2}
A_{P_3}\cdot A_{K_2} \; = \, A_{P_3+K_2} + 4A_{P_4}+2A_{S_3}+2A_{K_3}+4A_{P_3}
\end{equation}  
(see also the proofs given in the next section).

For $e\in E_\infty$ let $1_e:\bG\to \{0,1\}$ be the associated edge indicator,
so that $1_e(G)=1$ if $e\in E(G)$ and $1_e(G)=0$ otherwise. Clearly, $H$ is a subgraph of  $G$ 
if and only if $1_e(G)=1$ for all $e\in E(H)$, which can be expressed as $\prod_{e\in E(H)}1_e(G)=1$.
The counts $A(H,G_n)$ may then be written as an iterated sum of this function with respect to
the uniform distribution on $[n]$, provided we exclude integer vectors with repeated entries,
\begin{equation*}
    A(H,G) \ =\ \sum\;\sum\;\dots\;\sum_{\hspace{-1.9cm}(i_1,\ldots,i_k)\in I(n,k)}\quad  \prod_{\{j,l\}\in E(H)}  1_{\{i_j,i_l\}}(G),
\end{equation*}
with $k=v(H)\le n$. Through its analogy to the iterated integral in~\eqref{eq:Wconv} this also points to the 
familiar passage to empirical distributions in the linear (single integral) case.

Writing $1_e=(1_e-p) + p\,$ and expanding we are led to $Z_H:\bG\to \bR$,  
\begin{equation}\label{eq:defZ}
     Z_H(G) \; := \sum_{\phi\in I(k,n)}\, \prod_{e\in\phi(H)} \bigl(1_e(G) - p\bigr), \ \text{ where }
                                      k=v(H) \text{ and }  n=v(G).
\end{equation} 
We augment this with  $Z_\emptyset \equiv 1$.
We will usually regard $p$ as fixed and being clear from the context. Note that $Z_H(G)=A(H,G)$ 
if $p=0$, and that each $Z_H$ is a graph functional. Let $\cH$ be a set of graphs that consists of
the empty graph and, for each $H\in\bG$ without isolated vertices, an element of the isomorphism 
class associated with $H$ (alternatively, we could use unlabeled graphs as indices). 
Any graph functional $\Psi:\bG\to \bR$ then has a unique decomposition
\begin{equation}\label{eq:GFdec}
\Psi \, =\, \sum_{H\in\cH} c(\Psi,H ) \, Z_{H }
\end{equation}
with real coefficients $c(\Psi,H)$, see~\cite[Lemma 6.42]{JLR}. 
The $Z$-variables thus provide a basis for the algebra of graph functionals.  In particular, there is an extension 
to $p\not=0$ of the representation of products as linear combinations. For example, on $\bG_n$ with $n\ge 5$, the extension of~\eqref{eq:p3k2} is
\begin{equation}\label{eq:Zp3k2}
Z_{P_3} Z_{K_2} \  = \ Z_{P_3+K_2} + 4Z_{P_4} +2Z_{S_3}+2Z_{K_3}+4(1-p)Z_{P_3}
                       +4p(1-p)(n-2)Z_{K_2}.
\end{equation}  
Note that, in contrast to the pure counting case with $p=0$, graphs with only two nodes now appear in the 
linear representation of the product.   

Consider, for fixed $n\in \bN$ and $p$, the discrete probability space $(\bG_n,\ER(n,p))$. 
The probabilities of the  individual graphs $G$ are given  $p^{e(G)}(1-p)^{\binom{n}{2}- e(G)}$, and the edge indicators $1_e:\Omega\to\bR$, $e\in E_\infty$, are independent random variables with distribution $\Ber(1,p)$. 
This  implies that the random variables  $Z_{H}$ are uncorrelated for non-isomorphic $H$'s, with mean 0 
(if $H\not=\emptyset$) and variance
\begin{equation}\label{eq:decompL2} 
                \var(Z_H(G_n))=EZ^2_H(G_n)= \aut(H)(n)_{v(H)}(p(1-p))^{e(H)},
\end{equation}
see~\cite[Lemma 6.41]{JLR}. Hence the $Z$-basis is orthogonal in the Hilbert space $L^2(\bG_n,\ER(n,p))$,
and the basis elements have norm $\|Z_H\|_2$ of order $n^{v(H)/2}$ in this space.
This can be used to identify an asymptotically dominant term in an expansion such as~\eqref{eq:GFdec}. 

Moreover, similar to the Hoeffding decomposition in the classical case for functions that are invariant under permutations, the joint distributional behavior of the terms arising in the decomposition~\eqref{eq:GFdec} 
of a graph functional is known. 
The full statement and its proof are given in~\cite[Chapter 11]{JansonGHS}. Here we only require 
that for connected graphs $H\not=\emptyset$ the variables $n^{-v(H)/2}Z_H(G_n)$ converge in distribution
to a centered normal with variance $\aut(H)((p(1-p))^{e(H)}$, and that they are asymptotically independent 
for non-isomorphic graphs $H$;  see also~\cite[Theorem 6.43 and Example 6.50]{JLR}.

\section{Main results}\label{sec:results}
With the construction in~Section~\ref{subsec:graphon} any graphon $W$ 
leads to a distribution $\bP_{W,n}$ on $\bG_n$ for all $n\in\bN$. We now assume that the 
observed graph with $n$ nodes (the data) is the realization of a random graph $G_n$ with distribution $\bP_{W,n}$, 
where $W\in\cW$ is unknown. Note the analogy to the classical case where we observe a sample of size~$n$, 
which we regard as the observation of a random vector with (product) distribution $\mu^{\otimes n}$ where $\mu$ is unknown. 
Our aim is to construct tests of the (composite) hypothesis 
\begin{equation}\label{eq:hyp}
	H_0: \, \cL(G_n) =  \ER(n,p)\  \text{ for some } p\in (0,1).
\end{equation}
We exclude the trivial cases $p=0$ and $p=1$.
Our results are asymptotic in nature, which means that we consider an associated  sequence
$(T_n)_{n\in\bN}$ of tests, where $T_n$ rejects $H_0$ if its value exceeds a critical 
value $c=c(n,\alpha)$ that depends on the chosen level $\alpha$. We want the tests to be asymptotically 
similar, meaning that 
\begin{equation}\label{eq:similar}
    \lim_{n\to\infty}P\bigl(T_n < c(n,\alpha)\bigr)\, =\, \alpha \ \text{ if $H_0\;$ is satisfied}.
\end{equation}
Ideally, they are also consistent in the sense that 
\begin{equation}\label{eq:consistent}
    \lim_{n\to\infty}P\bigl(T_n\ge c(n,\alpha)\bigr)\, =\, 1 \ \text{ otherwise}.
\end{equation}
For \eqref{eq:similar} we need the distribution of $T_n$ under $\ER(n,p)$,
or its limit as $n\to\infty$. To obtain a distributional limit one often applies an affine
transformation, such as subtracting the mean and 
dividing by the standard deviation. This, however, may well involve 
the unknown parameter $p$.  A classical solution is `studentization' 
(here essentially an application of Slutsky's lemma), which works if only the scaling is affected.
If the shifts also depend on $p$ then simply replacing $p$ by a consistent estimator $\hat p_n$ may not be
enough. In the present paper we approach this
problem by using the decomposition outlined in Section~\ref{subsec:decomp} together with the fact the 
space of graph functionals is an algebra, as the product of functions that are
constant on orbits is again constant on orbits. We illustrate this with several results, each providing an asymptotically similar test sequence, 
where the third is also consistent. All test statistics  are (nonlinear) combinations of subgraph counts.

The first test is based on the inequality $t(P_3,W)\ge t(K_2,W)^2$, with equality on the hypothesis; see
Section~\ref{subsec:quasi}. In particular, 
if we select three different nodes $i,j,k\in[n]$ uniformly at random, then the  events $A$ and $B$ that the two incident edges $\{1,2\}$ and $\{2,3\}$ are both in $E(G_n)$ are (weakly) positively dependent in the sense that 
$P(A\cap B)\ge P(A)P(B)$, and independent on $H_0$. From Section~\ref{subsec:decomp} 
we know the distributional asymptotics of the empirical counterpart $t(P_3,G_n)$,
for $t(K_2,G_n)^2$ we could use the known result for $t(K_2,G_n)$ together with the delta method.
However, for the behavior of the difference their joint distribution is needed. It is here that the 
representation in terms of a fixed basis becomes important. 

\begin{theorem}\label{thm:mainP3}
Let $\alpha$, $0<\alpha<1$, be given and let $u_{\alpha}$ be the upper $\alpha$-quantile of the
standard normal distribution. Then the test sequence with test statistics $T_{1,n}$, 
\begin{equation}\label{eq:defT1}
T_{1,n}\, := \, \frac{n^{3/2}}{\sqrt{2} \; t(K_2,G_n)(1-t(K_2,G_n))}\Bigl(t(P_3,G_n)-t(K_2,G_n)^2\Bigr),
            \ \  n\in\bN,
\end{equation}
and critical value $u_{\alpha}$ is asymptotically similar at level $\alpha$. 
\end{theorem}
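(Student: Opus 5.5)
The plan is to compute the numerator $t(P_3,G_n)-t(K_2,G_n)^2$ exactly in the $Z$-basis of Section~\ref{subsec:decomp}, with $p$ the true parameter under $H_0$. The point of doing this is that the $p$-dependent centering and the $Z_{K_2}$ terms cancel. What remains is a leading term $Z_{P_3}$, to which the Janson CLT applies, plus a remainder of lower order.

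\emph{Step 1 (decomposition).} Write $1_e=(1_e-p)+p$ and expand over $\phi\in I(k,n)$. For $K_2$ this gives $A_{K_2}=Z_{K_2}+p\,(n)_2$. For $P_3$ I expand $1_{\{i,j\}}1_{\{j,k\}}=(x+p)(y+p)=xy+px+py+p^2$. In the linear terms, each ordered pair $(i,j)$ appears $n-2$ times, once for each choice of the third vertex. Summing therefore gives
\begin{equation*}
A_{P_3}=Z_{P_3}+2p(n-2)Z_{K_2}+p^2(n)_3 .
\end{equation*}
Dividing by $(n)_3$ and $(n)_2$ respectively yields
\begin{equation*}
t(P_3,G_n)=\frac{Z_{P_3}}{(n)_3}+\frac{2p\,Z_{K_2}}{(n)_2}+p^2,\qquad
t(K_2,G_n)^2=p^2+\frac{2p\,Z_{K_2}}{(n)_2}+\frac{Z_{K_2}^2}{(n)_2^2}.
\end{equation*}
Consequently
\begin{equation*}
t(P_3,G_n)-t(K_2,G_n)^2=\frac{Z_{P_3}(G_n)}{(n)_3}-\frac{Z_{K_2}(G_n)^2}{(n)_2^2}.
\end{equation*}
This is the cancellation effect: both the constant $p^2$ and the first-order term $Z_{K_2}$, which is of order $n$ and would otherwise dominate at rate $n^{-1}$, drop out.

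\emph{Step 2 (orders of magnitude).} By \eqref{eq:decompL2}, $E Z_{K_2}^2=2(n)_2\,p(1-p)$, so $Z_{K_2}=O_P(n)$. Hence $n^{3/2}Z_{K_2}^2/(n)_2^2=O_P(n^{-1/2})\to 0$ in probability.

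For the leading term, $P_3$ is connected with $v(P_3)=3$, $e(P_3)=2$ and $\aut(P_3)=2$. The cited CLT \cite[Theorem 6.43]{JLR} therefore gives $n^{-3/2}Z_{P_3}(G_n)\todistr N\bigl(0,2(p(1-p))^2\bigr)$. Since $n^3/(n)_3\to1$, it follows that
\begin{equation*}
n^{3/2}\bigl(t(P_3,G_n)-t(K_2,G_n)^2\bigr)\todistr N\bigl(0,2p^2(1-p)^2\bigr).
\end{equation*}

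\emph{Step 3 (studentization).} By \eqref{eq:Wconv}, $t(K_2,G_n)\to p$ almost surely. Because $p\in(0,1)$, the normalizer $\sqrt2\,t(K_2,G_n)(1-t(K_2,G_n))$ converges to $\sqrt2\,p(1-p)>0$. Slutsky's lemma then gives $T_{1,n}\todistr N(0,1)$ under every $\ER(n,p)$ in $H_0$.

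Since this limit does not depend on $p$, the rejection probability $P(T_{1,n}\ge u_\alpha)$ tends to $\alpha$ uniformly over the null family, and asymptotic similarity follows. The only genuinely delicate step is the bookkeeping in Step 1, namely getting the multiplicity $2(n-2)$ of $Z_{K_2}$ in $A_{P_3}$ exactly right. The whole argument rests on exact cancellation with the cross term of $t(K_2,G_n)^2$; any mismatch would leave a term of order $n^{-1}$, and $T_{1,n}$ would diverge.
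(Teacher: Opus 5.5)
Your proof is correct, and it takes a shorter route than the paper. Both proofs start from the same linear decompositions $A_{K_2}=Z_{K_2}+p\,(n)_2$ and \eqref{eq:decompA}. From there the paper uses the multiplicative structure of the algebra of graph functionals:
\begin{itemize}
\item it expands the product $Z_{K_2}^2$ in the $Z$-basis via \eqref{eq:Z2decomp};
\item it writes $t(P_3,G_n)-t(K_2,G_n)^2$ as $c_1Z_{K_2+K_2}+c_2Z_{P_3}+c_3Z_{K_2}+c_4$ and bounds each term with \eqref{eq:decompL2};
\item the cancellation then shows up only as the small leftover coefficient $c_3(n,p)=-2(1-2p)/(n^2(n-1)^2)$.
\end{itemize}
You never expand $Z_{K_2}^2$. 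You keep $Z_{K_2}^2/(n)_2^2$ as a single remainder and dispose of it with $Z_{K_2}=O_P(n)$, so the $Z_{K_2}$ terms cancel exactly and no product formula is needed. In effect this is a delta-method argument made rigorous by the common representation of $A_{K_2}$ and $A_{P_3}$ in terms of $Z_{K_2}$ and $Z_{P_3}$. Only the linear representations in the $Z$-basis are used, not the algebra structure the paper emphasizes.

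Your route is more elementary, and it carries over verbatim to Theorems~\ref{thm:mainK3}, \ref{thm:mainC4} and \ref{thm:mainC4P3}. Write $t(K_2,G_n)=p+X_n$ and $t(P_3,G_n)=p^2+Y_n$, where $X_n,Y_n=O_P(n^{-1})$. Then every term of degree at least two in $X_n,Y_n$ is $O_P(n^{-2})=o_P(n^{-3/2})$. The surviving linear terms reproduce the leading terms found there, namely $(Z_{K_3}+3pZ_{P_3})/(n)_3$, $4p^2Z_{P_3}/(n)_3$ and $2p^2Z_{P_3}/(n)_3$.

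What the paper's full expansion buys is an exact orthogonal decomposition of the statistic for each finite $n$, and hence its exact null variance, not only the limit. The exact mean $-2p(1-p)/(n)_2$, which is the bias behind Remark~\ref{rem:bias}, you also get for free from $EZ_{K_2}^2=2(n)_2p(1-p)$.

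One small overstatement in your last step. For each fixed $p\in(0,1)$ you obtain $P(T_{1,n}\ge u_\alpha)\to\alpha$, which is all that \eqref{eq:similar} requires. You do not obtain uniformity over the null family, and uniformity over all of $(0,1)$ in fact fails. For fixed $n$ the graph is empty with probability tending to one as $p\to 0$, so the statistic degenerates. Simply drop the word ``uniformly''.
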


\begin{proof}
In the first step we relate the subgraph counts to the $Z$-variables.
For the path $P_3\in\bG_3$ with edges $e_1,e_2$ and $G\in\bG_n$ we get 
\begin{align}\label{eq:decompA}
   A(&P_3,G)\ = \ \sum_{\phi\in I(3,n)}  1_{\phi(e_1)}(G)\, 1_{\phi(e_2)}(G)  \notag\\
                          &=\sum_{\phi\in I(3,n)} \bigl((1_{\phi(e_1)}-p)(1_{\phi(e_2)}-p)
                                              + p(1_{\phi(e_1)}-p) + p(1_{\phi(e_2)}-p)  + p^2\bigr)(G)\\
                          &=\quad \ Z_{P_3}(G) \; +\;  2(n-2)\,p\, Z_{K_2}(G) \; + \; n(n-1)(n-2)\, p^2.\notag
\end{align}
Next we represent the graph functional $G\mapsto Z_{K_2}(G)^2$ as a 
linear combination of $Z$-variables.
Clearly, on $\bG_n$, and with $e=\{1,2\}$ the single edge of $K_2$,
\begin{equation*}
  Z^2_{K_2}\  =\  \sum_{(\phi_1,\phi_2)\in I(2,n)\times I(2,n)} \ 
                                   \bigl(1_{\phi_1(e)}-p\bigr) \bigl(1_{\phi_2(e)}-p\bigr).
\end{equation*} 
We can combine $\phi_1$ and $\phi_2$ into an element of $I(k,n)$ for some $k\in\{2,3,4\}$, depending 
on the overlap of the respective range.  For $k=2$ this results in the same factor appearing twice, which 
we handle via $(1_e-p)^2 =  (1-2p)\bigl(1_e-p\bigr) +p(1-p)$. For $k=3$ we obtain $P_3$, for $k=4$ the 
result is $K_2+K_2$. Hence, on $\bG_n$ and observing multiplicities,
\begin{equation}\label{eq:Z2decomp}
  Z_{K_2}^2 \ = \   Z_{K_2+K_2}+ 4\, Z_{P_3} + 
                          2(1-2p)\, Z_{K_2} + 2n(n-1) \, p(1-p),
\end{equation}
which is the desired representation. 

Now we assume that $G_n$ is a random graph with distribution $\ER(n,p)$. In order to derive the 
distributional asymptotics of $t(P_3,G_n)$ we subtract the mean and use~\eqref{eq:decompA},
\begin{align*}
      t(P_3,G_n)- E\,t(P_3,G_n) \ &= \ \frac{1}{n(n-1)(n-2)}A(P_3,G_n) \; - \; p^2\\
       &=\ \frac{1}{n(n-1)(n-2)} Z_{P_3}(G_n) \, + \, \frac{2p}{n(n-1)} Z_{K_2}(G_n).
\end{align*}
Further, with~\eqref{eq:Z2decomp} we get
\begin{equation}\label{eq:A2split}
\begin{split}
  A_{K_2}^2  \ &=\ \bigl( Z_{K_2} +  n(n-1) p\bigr)^2\\
                    &=\ Z_{K_2}^2 + 2n(n-1)p\, Z_{K_2} + n^2(n-1)^2\, p^2\\
                    &=\ Z_{K_2+K_2}\, + \, 4\, Z_{P_3} \, +\, \bigl(2(1-2p)+2n(n-1)p\bigr)\, Z_{K_2}\\ 
                    &\hspace{1.9cm} + \, 2n(n-1)p(1-p)\, +\,  n^2(n-1)^2 p^2.
\end{split}
\end{equation} 
Taken together this leads to the representation
\begin{align*}
  t(P_3,G_n) - t(K_2,G_n)^2 
          &= \     c_1(n) Z_{K_2+K_2}(G_n) \, +\, c_2(n) Z_{P_3}(G_n)\\ 
          &\qquad  +\, c_3(n,p)Z_{K_2}(G_n) \,+\, c_4(n,p),
\end{align*}
where the coefficients are given by
\begin{align*}
        c_1(n) \ &=\    -\frac{1}{n^2(n-1)^2}    ,\\        
        c_2(n) \ &=\    \frac{1}{n(n-1)(n-2)}  -  \frac{4}{n^2(n-1)^2}   ,\\        
        c_3(n,p) \ &=\    \frac{2p}{n(n-1)}  -  \frac{2(1-2p)+2n(n-1)p}{n^2(n-1)^2}  \ 
                              =\ -\frac{2(1-2p)}{n^2(n-1)^2}       ,\\        
        c_4(n,p) \ &=\    p^2  -  \frac{2p(1-p)}{n(n-1)}  - p^2\ =\ -  \frac{2p(1-p)}{n(n-1)} .
\end{align*}
We now use~\eqref{eq:decompL2} to evaluate the order of magnitude of the individual terms. This leads to
\begin{equation}\label{eq:distP2toK2square}
     n^{3/2}\bigl(t(P_3,G_n) - t(K_2,G_n)^2 \bigr) \; - \; n^{3/2} \, Z_{P_3}(G_n) \ = \ o_P(1) \quad\text{as } n\to \infty.
\end{equation}
Note that a cancellation occurred in the $K_2$-term, which had been dominant 
in the separate counts of $K_2$ and $P_3$. The distributional
asymptotics of the statistic $t(P_3,G_n) - t(K_2,G_n)^2$  is thus related to the term associated with paths of 
length two. For $Z_{P_3}$ we can apply~\cite[Theorem 6.45 and Corollary  6.47]{JLR} together with 
\eqref{eq:decompL2} to obtain asymptotic normality
of $n^{3/2}Z_{P_3}$, with mean 0 and variance $\sigma^2(p)=2p^2(1-p)^2$ for the limit distribution. 

It follows from~\eqref{eq:Wconv} that $t(K_2,G_n)$ is a consistent estimator for the unknown $p$. 
Inserting $t(K_2,G_n)$ for $p$ in the formula for the limit variance and using Slutsky's lemma together 
with the continuos mapping theorem we obtain the assertion.
\end{proof}


A test based on triangle counts has been introduced and discussed in~\cite{BFJ}.  Using similar arguments 
as in the proof of Theorem~\ref{thm:mainP3}  we obtain the following result. 
The limit variance now consists of two components, contributed by $K_3$ and $P_3$ respectively. 

\begin{theorem}\label{thm:mainK3}
Let $\alpha$, $0<\alpha<1$, be given and let $u_{\alpha/2}$ be the upper $\alpha/2$-quantile of the
standard normal distribution. Let 
\begin{equation*}
   \phi(G_n) \; := \; 6 \, t(K_2,G_n)^3 \bigl(1-t(K_2,G_n)\bigr)^3 \, +\;
                                18 \, t(K_2,G_n)^4 \bigl(1-t(K_2,G_n)\bigr)^2. 
\end{equation*}
Then the test sequence with test statistics $|T_{2,n}|$, where 
\begin{equation}\label{eq:defT2}
         T_{2,n}\, := \, \frac{n^{3/2}}{\sqrt{\phi(G_n)}}\Bigl(t(K_3,G_n)-t(K_2,G_n)^3\Bigr),    
                         \  \  n\in\bN,
\end{equation}
and critical value $u_{\alpha/2}\,$ is asymptotically similar at level $\alpha$. 
\end{theorem}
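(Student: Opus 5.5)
The plan is to copy the structure of the proof of Theorem~\ref{thm:mainP3}. I will write $t(K_3,G_n)-t(K_2,G_n)^3$ in the $Z$-basis of~\eqref{eq:GFdec}, show that the $Z_{K_2}$-terms cancel, and find that the dominant part is a combination of $Z_{K_3}$ and $Z_{P_3}$. Asymptotic normality then follows from the joint central limit theorem for the $Z$-variables from Section~\ref{subsec:decomp}, followed by studentization.

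\emph{Step 1: decomposing the triangle count.} Write each of the three edge indicators in $A(K_3,G)=\sum_{\phi\in I(3,n)}\prod_{e\in\phi(K_3)}1_e(G)$ as $(1_e-p)+p$ and expand, as in~\eqref{eq:decompA}.
\begin{itemize}
\item The product of the three centered factors gives $Z_{K_3}$.
\item Each of the three choices of two centered factors forms a path with two edges. Summed over $I(3,n)$ it gives $Z_{P_3}$, so these terms contribute $3p\,Z_{P_3}$.
\item Each single centered factor, summed over $I(3,n)$, gives $(n-2)Z_{K_2}$, so these terms contribute $3p^2(n-2)Z_{K_2}$.
\item The constant term is $(n)_3\,p^3$.
\end{itemize}
Dividing by $(n)_3$,
\begin{equation*}
t(K_3,G_n)=p^3+\frac{Z_{K_3}+3p\,Z_{P_3}}{(n)_3}+\frac{3p^2}{(n)_2}\,Z_{K_2}.
\end{equation*}

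\emph{Step 2: expanding the cube of the edge density.} Since $t(K_2,G_n)=p+Z_{K_2}/(n)_2$,
\begin{equation*}
t(K_2,G_n)^3=p^3+\frac{3p^2}{(n)_2}Z_{K_2}+\frac{3p}{(n)_2^2}Z_{K_2}^2+\frac{1}{(n)_2^3}Z_{K_2}^3.
\end{equation*}
The linear $Z_{K_2}$-terms in the two expansions are identical and cancel. This cancellation is the key effect; without it the rate would be $n^{-1}$.

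For the quadratic term I use~\eqref{eq:Z2decomp}, $Z_{K_2}^2=Z_{K_2+K_2}+4Z_{P_3}+2(1-2p)Z_{K_2}+2n(n-1)p(1-p)$. By~\eqref{eq:decompL2} the summands are $O_P(n^2)$, $O_P(n^{3/2})$, $O_P(n)$ and $O(n^2)$ respectively. Hence $n^{3/2}\cdot 3p\,Z_{K_2}^2/(n)_2^2=O_P(n^{-1/2})$. For the cubic term I do not need a full decomposition: $Z_{K_2}=O_P(n)$ gives $n^{3/2}Z_{K_2}^3/(n)_2^3=O_P(n^{-3/2})$. Therefore
\begin{equation*}
n^{3/2}\bigl(t(K_3,G_n)-t(K_2,G_n)^3\bigr)-n^{-3/2}\bigl(Z_{K_3}(G_n)+3p\,Z_{P_3}(G_n)\bigr)=o_P(1).
\end{equation*}
Note that the statistic has a nonzero mean, but it is of order $n^{-2}$ and is absorbed in the $o_P(1)$ term.

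\emph{Step 3: the joint limit.} $K_3$ and $P_3$ are connected, non-isomorphic and both have three vertices. By the result quoted at the end of Section~\ref{subsec:decomp}, $n^{-3/2}Z_{K_3}$ and $n^{-3/2}Z_{P_3}$ converge jointly to independent centered normals with variances $\aut(K_3)(p(1-p))^3=6p^3(1-p)^3$ and $\aut(P_3)(p(1-p))^2=2p^2(1-p)^2$. The limit of $n^{-3/2}(Z_{K_3}+3pZ_{P_3})$ is therefore centered normal with variance $6p^3(1-p)^3+18p^4(1-p)^2$, which is $\phi$ evaluated at $p$. Since $t(K_2,G_n)\toP p$ and $p\in(0,1)$, this variance is strictly positive. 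Slutsky's lemma and the continuous mapping theorem then give $T_{2,n}\todistr N(0,1)$, so $P(|T_{2,n}|\ge u_{\alpha/2})\to\alpha$. The test is two-sided because, unlike $t(P_3,W)-t(K_2,W)^2$, the difference $t(K_3,W)-t(K_2,W)^3$ has no fixed sign over $\cW$.

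The main obstacle is the bookkeeping in Steps 1 and 2. The multiplicities in the expansion of $A(K_3,\cdot)$ must come out exactly as stated, because exact cancellation of the $Z_{K_2}$-terms is essential. Each leftover term must also be checked to be $o_P(n^{-3/2})$; the quadratic term $Z_{K_2}^2$, with its deterministic part of order $n^2$, is the one to watch. Beyond that, the only input needed is the joint asymptotic independence of $Z_{K_3}$ and $Z_{P_3}$, which is quoted from~\cite{JansonGHS,JLR}.
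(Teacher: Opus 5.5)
Your proof is correct, and its overall structure is the paper's: the $Z$-expansion of $A_{K_3}$, cancellation of the $Z_{K_2}$-terms, the joint limit of $n^{-3/2}Z_{K_3}$ and $n^{-3/2}Z_{P_3}$, and studentization.

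The genuine difference is how you handle $t(K_2,G_n)^3$. The paper stays inside the algebra of graph functionals. It writes $A_{K_2}^3=A_{K_2}^2A_{K_2}$ and inserts the $Z$-basis representations of $A_{K_2}$, $A_{K_2}^2$ and $Z_{K_2}^2$. It then computes the explicit $Z_{K_2}$-coefficient $ab+c+bd=3n^2(n-1)^2p^2+O(n^2)$, so the cancellation holds only up to lower order. The leftover products $Z_{K_2}Z_{K_2+K_2}$ and $Z_{K_2}Z_{P_3}$ are bounded via $\|XY\|_2\le\|X\|_\infty\|Y\|_2$, and the constant terms are compared separately through the third moment of a binomial variable.

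You instead expand $\bigl(p+Z_{K_2}/(n)_2\bigr)^3$ as a polynomial in the single variable $Z_{K_2}$, which is an exact form of the delta method. The linear terms then cancel identically. The remainder $3p\,Z_{K_2}^2/(n)_2^2+Z_{K_2}^3/(n)_2^3$ is $O_P(n^{-2})$ from $Z_{K_2}=O_P(n)$ alone, and this also disposes of the constant term and the bias. Your appeal to~\eqref{eq:Z2decomp} at that point is therefore superfluous; Chebyshev's inequality for $Z_{K_2}$ suffices.

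So your route needs only the linear $Z$-expansion of the subgraph count, not the product structure of the algebra, and it is noticeably shorter. The same device would also shorten the proofs of Theorems~\ref{thm:mainC4} and~\ref{thm:mainC4P3}. For instance, $t(P_3,G_n)-p^2=O_P(n^{-1})$ makes the whole quadratic part of $t(P_3,G_n)^2$ of order $O_P(n^{-2})$, with no case analysis of $Z_{P_3}^2$. The paper's route fits its programme of writing the entire statistic in the orthogonal $Z$-basis, which is what one would want for exact second-order information. For the first-order limit theorem stated here, however, your shortcut loses nothing.
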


\begin{proof}
We follow the steps of the proof of Theorem~\ref{thm:mainP3}. Let $e_1,e_2,e_3$ be the three edges 
of the triangle $K_3\in\bG_3$. The decomposition $1_e=(1_e-p)+p$ leads to
\begin{equation*}
   A_{K_3} \; = \; Z_{K_3}  + 3 pZ_{P_3}  + 3(n-2)p^2Z_{K_2} + n(n-1)(n-2)p^3,
\end{equation*}
on $\bG_n$ and \eqref{eq:decompL2} gives
\begin{equation*}
    \var(Z_{K_3}(G_n)) = 6 (n)_3 (p(1-p))^3, \quad  \var(Z_{P_3}(G_n)) = 2 (n)_3 (p(1-p))^2. 
\end{equation*}
From~\cite[Theorem 6.43 (i) and (ii)]{JLR} it 
follows that $n^{-3/2}Z_{K_3}$ and $n^{-3/2} Z_{P_3}$ are both asymptotically normal and that they are asymptotically independent.  It remains to show that subtracting $t(K_2,G_n)^3$ makes these the
dominant terms in the sense that the coefficients of all other $Z$-variables are of order $o(n^{-3/2})$. 

For the constant terms we can use the general fact that these are always equal to the respective expected 
value. For  $t(K_3,G_n)$ this is $p^3$, for $t(K_2,G_n)^3$ it is $8E(X_n^3)/(n^3(n-1)^3)$,
where $X_n$ has a binomial distribution with parameters $n(n-1)/2$ and $p$. A straightforward 
calculation shows that the difference is of order $n^{-2}$ and thus asymptotically irrelevant if the 
scaling from the assertion is applied.  

For the other terms  we use  $A_{K_2}^3=A_{K_2}^2 A_{K_2}$.
The individual representations are as follows, see~\eqref{eq:Z2decomp} and~\eqref{eq:A2split},
\begin{align*}
    A_{K_2} \; &=\;  Z_{K_2} \,+\, a(n,p),\\
    A_{K_2}^2 \; &=\;   Z_{K_2+K_2}\, +4Z_{P_3} \,+\, b(n,p)Z_{K_2}\, +\, c(n,p),\\
    Z_{K_2}^2 \; &=\; Z_{K_2+K_2}\, +4Z_{P_3} \,+\, d(p)Z_{K_2}\, +\, e(n,p),
\end{align*}
%
with
%
\begin{align*}
     a(n,p) \ &:=\ n(n-1)p,        \\
     b(n,p) \ &:=\ 2(1-2p)\, +\, 2n(n-1)p,          \\
     c(n,p) \ &:=\ 2n(n-1)p(1-p)\, +\, n^2(n-1)^2p^2,          \\     
     d(p) \ &:=\ 2(1-2p),          \\
     e(n,p) \ &:= \ 2n(n-1)p(1-p).     
\end{align*}
For $Z_{K_2}$ the coefficient in $A_{K_2}^3$ evaluates to
\begin{equation*}
 a(n,p) b(n,p) + c(n,p) + b(n,p)d(p)\	= \  3n^2(n-1)^2p^2\; +\; O(n^2),
\end{equation*}
which shows that, asymptotically,  the $Z_{K_2}$-coefficients  cancel 
in the final representation, as in Theorem~\ref{thm:mainP3}. 
(We would obtain $a(n,p)c(n,p)+b(n,p)e(n,p)$ for the constant term, which leads to a computational 
alternative to the above argument involving expectations.)

The remaining terms are
\begin{equation}\label{eq:decompT21}
    a(n,p)\bigl(A_{K_2}^2-b(n,p)Z_{K_2}-c(n,p)\bigr)\; =\; n(n-1)p\bigl(Z_{K_2+K_2}+4Z_{P_3}\bigr)
\end{equation}
and
\begin{equation}\label{eq:decompT21B}
   Z_{K_2}\cdot Z_{K_2+K_2}\, +\, 4\,Z_{K_2}\cdot Z_{P_3} +b(n,p)Z_{K_2+K_2}+4\,b(n,p)Z_{P_3}.
\end{equation}
For these we use~\eqref{eq:decompL2}, resulting in 
\begin{equation*}
    \| n(n-1)pZ_{K_2+K_2}\|_2 =O(n^4), \quad \| 4n(n-1)pZ_{P_3}\|_2 =O(n^{7/2}),
\end{equation*} 
or we use the supremum norm together with the general inequality $\|X\cdot Y\|_2\le \|X\|_\infty\|Y\|_2$ 
to obtain, on $\bG_n$;
\begin{align}\label{eq:infL2}
      \|Z_{K_2}\cdot Z_{K_2+K_2}\|_2 \,&\le  \, \|Z_{K_2}\|_\infty \, \|Z_{K_2+K_2}\|_2 
                                            \, \le \, n(n-1)\|Z_{K_2+K_2}\|_2\, = \, O(n^{4}),\notag \\
      \|Z_{K_2}\cdot Z_{P_3}\|_2 \, &\le  \, \|Z_{K_2}\|_\infty \, \|Z_{P_3}\|_2 \,  
                                            \le \, n(n-1)\|Z_{P_3}\|_2\, = \, O(n^{7/2}),
\end{align}
so that we have the desired rate after multiplication by $n^{-6}$. 

Finally we apply studentization as in the proof of Theorem~\ref{thm:mainP3}. 
\end{proof}

This may seem to be at odds with the result in~\cite[Example 5]{BFJ}, which states that for a valid test 
one would need that the parameter $p$ depends on $n$ such that $np^2\to 0$. 
However, the latter refers to the rate $n$ rather 
than $n^{3/2}$ as in our result, and indeed, the argument in~\cite{BFJ} shows that 
$n(t(K_3,G_n)-t(K_2,G_n)^3)=o_P(1)$ if $p$ is held constant. 
For the standard statistical setup with fixed $p$ a cancellation effect appears
in the transition to the faster rate.

\begin{remark}[Sidedness]\label{rem:sided}
The tests in Theorem~\ref{thm:mainP3}, and Theorems~\ref{thm:mainC4} and \ref{thm:mainC4P3} below, 
are one-sided, whereas 
Theorem~\ref{thm:mainK3} deals with a two-sided test. Note that the critical value is smaller in the
one-sided case. The choice is motivated by the behavior on alternatives (see also Section~\ref{sec:altpar}): 
For $C_4$ we obtain consistency with the one-sided version, and the general inequalities at the end
of Section~\ref{subsec:quasi} can be used to show that in the situation of Theorem~\ref{thm:mainP3}
all non-detectable alternatives $W$ must have $t(P_3,W)=t(K_2,W)^2$. In connection with triangles,
however, non-trivial deviations in both directions may occur. Bipartite graphs provide a classical example: 
There are no triangles at all, whereas the edge density is of the order $1/4$.  With the
one-sided test based on $T_{2,n}$ exceeding $u_\alpha$ this obviously non-homogeneous alternative
would not be detected.  
\brend
\end{remark}

In our next result we obtain a test sequence that asymptotically detects all graphon-based alternatives
with probability 1.
The fact that $T_1$ and $T_2$ do not have this general property will be discussed in Section~\ref{sec:altpar}.

\begin{theorem}\label{thm:mainC4}
Let $\alpha$, $0<\alpha<1$, be given and let $u_{\alpha}$ be the upper $\alpha$-quantile of the
standard normal distribution. Then the test sequence with test statistics
\begin{equation}\label{eq:defT3}
T_{3,n}\, := \, \frac{n^{3/2}}{4\sqrt{2} \; t(K_2,G_n)^3(1-t(K_2,G_n))}\Bigl(t(C_4,G_n)-t(K_2,G_n)^4\Bigr),
\end{equation}
$n\in\bN$, and critical value $u_{\alpha}$ is asymptotically similar at level $\alpha$. 

Moreover, the test sequence is consistent in the sense of~\eqref{eq:consistent}.
\end{theorem}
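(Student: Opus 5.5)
The plan follows the proof of Theorem~\ref{thm:mainP3}. I will expand both $t(C_4,G_n)$ and $t(K_2,G_n)^4$ in the $Z$-basis of Section~\ref{subsec:decomp} under $\ER(n,p)$, and show that after the cancellation of the $Z_{K_2}$-terms the dominant term is a multiple of $Z_{P_3}$. Consistency will then follow from the Chung--Graham--Wilson inequality~\eqref{eq:ERinequ} together with~\eqref{eq:Wconv}.

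\emph{Step 1: decomposition of $A_{C_4}$.} Write each of the four edge indicators of $\phi(C_4)$ as $(1_e-p)+p$ and expand. Sorting the terms by the surviving edge subsets gives, on $\bG_n$,
\begin{equation*}
A_{C_4} = Z_{C_4} + 4p\,Z_{P_4} + 4p^2(n-3)\,Z_{P_3} + 2p^2\,Z_{K_2+K_2} + 4p^3(n-2)(n-3)\,Z_{K_2} + (n)_4\,p^4 .
\end{equation*}
The $P_3$ term comes from the four adjacent edge pairs, with a free fourth vertex. The $K_2+K_2$ term comes from the two opposite pairs. In the $K_2$ term two free vertices remain. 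The exact multiplicities will be checked by the orbit-counting argument used for~\eqref{eq:decompA}; only the leading orders matter. Dividing by $(n)_4$, the coefficients of $Z_{K_2}$ and $Z_{P_3}$ in $t(C_4,G_n)$ are $4p^3/(n(n-1))$ and $4p^2/n^3+O(n^{-4})$, and all other coefficients are $O(n^{-4})$.

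\emph{Step 2: decomposition of $t(K_2,G_n)^4$.} Write $A_{K_2}=Z_{K_2}+a$ with $a=n(n-1)p$, expand $(Z_{K_2}+a)^4$ binomially, and divide by $(n(n-1))^4$.
\begin{itemize}
\item The term $4a^3Z_{K_2}$ has coefficient $4p^3/(n(n-1))$, which cancels the $Z_{K_2}$-coefficient from Step 1 exactly.
\item The term $6a^2Z_{K_2}^2$ is treated with~\eqref{eq:Z2decomp}. It contributes $O(n^{-4})$ coefficients to $Z_{P_3}$ and $Z_{K_2+K_2}$, and $O(n^{-4})$ to $Z_{K_2}$.
\item For the terms $4aZ_{K_2}^3$ and $Z_{K_2}^4$ I will not compute the full decomposition. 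Instead I use $\|Z_{K_2}^k\|_2\le \|Z_{K_2}\|_\infty^{k-2}\|Z_{K_2}^2\|_2=O(n^{k})$, as in~\eqref{eq:infL2}, where $\|Z_{K_2}^2\|_2=O(n^2)$ follows from~\eqref{eq:Z2decomp} and~\eqref{eq:decompL2}. These contributions are then $O_P(n^{-3})$ and $O_P(n^{-4})$.
\item The constant terms are handled through expectations: $E\,t(K_2,G_n)^4-p^4=O(\var\, t(K_2,G_n))=O(n^{-2})$.
\end{itemize}

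\emph{Step 3: limit under $H_0$.} By~\eqref{eq:decompL2}, $\|Z_H\|_2\asymp n^{v(H)/2}$. Every term other than $4p^2n^{-3}Z_{P_3}$ is therefore $o_P(n^{-3/2})$. This uses $n^{-4}n^{2}$ for $Z_{C_4},Z_{P_4},Z_{K_2+K_2}$, $n^{-4}n$ for the residual $Z_{K_2}$ parts, and the bounds from Step 2. Hence
\begin{equation*}
n^{3/2}\bigl(t(C_4,G_n)-t(K_2,G_n)^4\bigr) = 4p^2 n^{-3/2}Z_{P_3}(G_n) + o_P(1),
\end{equation*}
which converges in distribution to a centered normal with variance $16p^4\cdot 2p^2(1-p)^2$, that is, standard deviation $4\sqrt2\,p^3(1-p)$. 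Studentizing with the consistent estimator $t(K_2,G_n)$ and applying Slutsky's lemma yields asymptotic similarity, exactly as in Theorem~\ref{thm:mainP3}. The main obstacle is the careful bookkeeping of the $Z_{K_2}$ and $Z_{P_3}$ coefficients. The exact cancellation of the $n^{-2}$ order $Z_{K_2}$ term is crucial: without it the statistic would live on the scale $n^{-1}$.

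\emph{Step 4: consistency.} Let $W$ be a graphon that is not a.s.\ constant. By~\eqref{eq:Wconv}, $t(C_4,G_n)-t(K_2,G_n)^4\toas t(C_4,W)-t(K_2,W)^4$, and this limit is strictly positive by~\eqref{eq:ERinequ} and its equality case. Moreover $t(K_2,W)\in(0,1)$, since $t(K_2,W)\in\{0,1\}$ would force $W\equiv0$ or $W\equiv1$ a.s. Hence the denominator in~\eqref{eq:defT3} converges a.s.\ to a positive constant, so $T_{3,n}\to\infty$ a.s. and $P(T_{3,n}\ge u_\alpha)\to1$.
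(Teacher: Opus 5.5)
Your proposal is correct and is essentially the paper's proof. It uses the same $Z$-basis expansion of $A_{C_4}$, the exact cancellation of the $Z_{K_2}$-coefficient $4p^3/(n(n-1))$, the leading term $4p^2Z_{P_3}/(n)_3$ with limit variance $32p^6(1-p)^2$, studentization, and consistency via the Chung--Graham--Wilson inequality. The only differences are that you expand $(Z_{K_2}+a)^4$ binomially rather than squaring the decomposition of $A_{K_2}^2$, and that you also check that the denominator has a positive limit.

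One small slip: $\|Z_{K_2}\|_\infty$ is of order $n^2$, not $n$, since it is bounded by $n(n-1)$ (cf.\ \eqref{eq:infL2}). Your sup-norm route therefore only gives $\|Z_{K_2}^3\|_2=O(n^4)$ and $\|Z_{K_2}^4\|_2=O(n^6)$, so both contributions are $O(n^{-2})$ rather than $O(n^{-3})$ and $O(n^{-4})$. These are still $o(n^{-3/2})$, and your stated rates follow directly from $Z_{K_2}=O_P(n)$ via Chebyshev.
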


\begin{proof} 
As in the previous proofs we obtain, on $\bG_n$,
\begin{equation}
\begin{split}
   A_{C_4} \;  = \;  &Z_{C_4} + 4p Z_{P_4} + 2p^2 Z_{K_2+K_2}
                                 + 4(n-3)p^2 Z_{P_3}\\
                                  &\hspace{1cm}+ 4(n-2)(n-3)p^3 Z_{K_2}
                                 + n(n-1)(n-2)(n-3)p^4.
\end{split}
\label{eq:decompC4}
\end{equation}	
which leads to a representation of $t(C_4,G_n)$.
To obtain a corresponding representation for $t(K_2,G_n)^4$ we use the formula for $A_{K_2}^2$ from the
proof of Theorem~\ref{thm:mainK3}. Taking the square leads to ten different terms. Of these,
$Z_{K_2+K_2}^2$, $Z_{P_3}Z_{K_2+K_2}$, $Z_{P_3}^2$, $b(n,p)Z_{K_2}Z_{K_2+K_2}$, and
$b(n,p) Z_{K_2}Z_{P_3}$ are easily handled through the $\|\cdot\|_\infty$ and  $\|\cdot\|_2$ 
norms as in the previous proof. For $b(n,p)^2 Z_{K_2}^2$ we similarly use the representation 
of $Z_{K_2}^2$ given there. The remaining part of the square is
\begin{equation*}
c(n,p)^2 + 2 c(n,p)b(n,p)Z_{K_2} + 8 c(n,p)Z_{P_3}.
\end{equation*}
Dividing this by $(n)_2^4$, letting $n\to \infty$ and ignoring terms of smaller order than $n^{-3/2}$
this reduces to $p^4+4n^{-2}p^3Z_{K_2}$, so that we obtain the desired cancellation. 
This leads to
\begin{equation*}
    \bigl(t(C_4,G_n) - t(K_2,G_n)^4\bigr)\; -\; 4n^{-3/2}p^2 \,Z_{P_3}(G_n) \ \toP \ 0,
\end{equation*}
with the asymptotic normality of $n^{-3/2}Z_{P_3}(G_n)$ it follows that
\begin{equation*}\label{eq:T0}
                n^{3/2} \bigl(t(C_4,G_n)-t(K_2,G_n)^4\bigr)\ \todistr \ N\bigl(0, 32p^6(1-p)^2\bigr),
\end{equation*}
and we use studentization  to obtain~\eqref{eq:defT3}.

Finally, for consistency, a standard argument applies: 
Assume that $W$ is not almost surely equal to a constant. 
By the quasirandomness result in Section~\ref{subsec:quasi} we then have 
\begin{equation*}
         \lim_{n\to\infty}t(C_4,G_n)=t(C_4,W) > t(K_2,W)^4 =\lim_{n\to \infty} t(K_2,G_n),
\end{equation*}
all with probability~1.
Thus, $T_{3,n}\to \infty$ a.s., which means that the hypothesis will be rejected with probability~1 as the size 
$n$ of $G_n$ tends to $\infty$.
\end{proof}

\begin{remark}[Bias correction]\label{rem:bias}
In the above results 
we used $t(K_2,G_n)$ to estimate $p$ and extended this by the `plug-in 
principle' to $t(K_2,G_n)^j$ to obtain estimators for the powers $p^j$ of $p$,  with $j=2,3,4$. 
We may obviously replace 
these estimators of $p^j$ by any other estimator $\widehat{p_j}(G_n)$ without changing the limit distribution
as long as $\widehat{p_j}(G_n)-t(K_2,G_n)^j=o_P(n^{-3/2})$. A natural choice
is based on the fact that the factorial moments $E(X)_j$ of $X\sim\Bin(k,p)$ are given by $ (k)_j\, p^j$.
Recalling that $A(K_2,G_n)/2\sim \Bin(n(n-1)/2,p)$ under $\ER(n,p)$ this motivates the use of
\begin{equation}\label{eq:unbiased}
   \widehat{p_j}(G_n)\, := \; \frac{\displaystyle\frac{A(K_2,G_n)}{2}
                  \biggl(\frac{A(K_2,G_n)}{2}-1\biggr)\cdot\ldots \cdot\biggl(\frac{A(K_2,G_n)}{2}-j+1\biggr)}
        {\displaystyle\frac{n(n-1)}{2}\biggl(\frac{n(n-1)}{2}-1\biggr)\cdot \ldots\cdot\biggl(\frac{n(n-1)}{2}-j+1\biggr)}.
\end{equation}
For $X\sim\Bin(k,p)$ we have $X^j\ge (X)_j$ and $E\bigl(X^j-(X)_j\bigr)\le c_j k^{j-1}$ where the constant 
$c$ does not depend on $k$. Together 
with Chebyshev's inequality this implies that the rate condition is satisfied. Replacing the plug-in estimator by 
$\widehat{p_j}(G_n)$ then leads to tests that are bias corrected in the sense that the new test statistics 
$T_{l,n}\bc$, $l=1,2,3$, have mean 0 on $H_0$. 
\brend
\end{remark}

A motivation for the test $T_1$ in Theorem~\ref{thm:mainP3} was its appearance in one of the two
inequalities in the quasirandomness argument in Section~\ref{subsec:quasi}. The other inequality
similarly leads to a fourth test, now based on the comparison of the counts of $C_4$ and $P_3$. 

\begin{theorem}\label{thm:mainC4P3}
Let $\alpha$, $0<\alpha<1$, be given and let $u_{\alpha}$ be the upper $\alpha$-quantile of the
standard normal distribution. Then the test sequence $T_4=(T_{4,n})_{n\in\bN}$ with 
\begin{equation}\label{eq:defT4}
T_{4,n}\, := \, \frac{n^{3/2}}{2\sqrt{2} \; t(K_2,G_n)^3(1-t(K_2,G_n))}\Bigl(t(C_4,G_n)-t(P_3,G_n)^2\Bigr),
\ n\in\bN,
\end{equation}
and critical value $u_{\alpha}$ is asymptotically similar at level $\alpha$. 
\end{theorem}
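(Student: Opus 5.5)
The plan is to proceed as in the proofs of Theorems~\ref{thm:mainP3} and~\ref{thm:mainC4}, but here we can avoid expanding the square in the $Z$-basis: it suffices to linearize $t(P_3,G_n)^2$ around $p^2$ and control the remainder in $L^2$. Throughout, $G_n\sim\ER(n,p)$. From~\eqref{eq:decompA},
\begin{equation*}
   t(P_3,G_n) \,=\, p^2 + \delta_n, \qquad \delta_n := \frac{Z_{P_3}(G_n)}{(n)_3} + \frac{2p\,Z_{K_2}(G_n)}{(n)_2},
\end{equation*}
so that $t(P_3,G_n)^2 = p^4 + 2p^2\delta_n + \delta_n^2$. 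By~\eqref{eq:decompL2}, $\|Z_{K_2}\|_2 = O(n)$ and $\|Z_{P_3}\|_2=O(n^{3/2})$, hence $\|\delta_n\|_2 = O(n^{-1})$ and $E\,\delta_n^2 = O(n^{-2})$. By Markov's inequality, $n^{3/2}\delta_n^2 = o_P(1)$. Therefore
\begin{equation*}
   t(P_3,G_n)^2 \,=\, p^4 + \frac{4p^3 Z_{K_2}(G_n)}{(n)_2} + \frac{2p^2 Z_{P_3}(G_n)}{(n)_3} + o_P(n^{-3/2}).
\end{equation*}

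Next I divide~\eqref{eq:decompC4} by $(n)_4$. The terms with $Z_{C_4}$, $Z_{P_4}$ and $Z_{K_2+K_2}$ have $L^2$ norms of order $n^2$ by~\eqref{eq:decompL2}. After division by $(n)_4$ they are $O_P(n^{-2}) = o_P(n^{-3/2})$. The remaining terms are
\begin{equation*}
   t(C_4,G_n) \,=\, p^4 + \frac{4p^3 Z_{K_2}(G_n)}{(n)_2} + \frac{4p^2 Z_{P_3}(G_n)}{(n)_3} + o_P(n^{-3/2}),
\end{equation*}
using $4(n-2)(n-3)/(n)_4 = 4/(n)_2$ and $4(n-3)/(n)_4=4/(n)_3$. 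Subtracting, the constant and the $Z_{K_2}$ terms cancel exactly. This is the same cancellation effect as before, and here it is clean because the $Z_{K_2}$-coefficients $4p^3/(n)_2$ agree identically. We are left with
\begin{equation*}
   n^{3/2}\bigl(t(C_4,G_n)-t(P_3,G_n)^2\bigr) \,=\, 2p^2\, n^{3/2}\,\frac{Z_{P_3}(G_n)}{(n)_3} + o_P(1).
\end{equation*}

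By the asymptotic normality of $n^{-3/2}Z_{P_3}(G_n)$ with limit variance $2p^2(1-p)^2$ (see Section~\ref{subsec:decomp} and the proof of Theorem~\ref{thm:mainP3}), the right-hand side converges in distribution to $N\bigl(0,8p^6(1-p)^2\bigr)$. The standard deviation of this limit is $2\sqrt2\,p^3(1-p)$, which is exactly the normalization in~\eqref{eq:defT4}. Since $t(K_2,G_n)\to p$ almost surely by~\eqref{eq:Wconv} and $p\in(0,1)$, Slutsky's lemma and the continuous mapping theorem give $T_{4,n}\todistr N(0,1)$. Asymptotic similarity with the critical value $u_\alpha$ follows.

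The main point needing care is the quadratic remainder $\delta_n^2$. One must check that no hidden deterministic part of order $n^{-3/2}$ survives, for example through the constant component of $Z_{K_2}^2$ in~\eqref{eq:Z2decomp}. This is settled by the $L^2$ bound above: that constant part is $O(n^2)/(n)_2^2 = O(n^{-2})$. The cancellation in the $Z_{K_2}$ terms is the step I would verify most carefully against~\eqref{eq:decompC4} and~\eqref{eq:decompA}.
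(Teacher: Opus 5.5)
Your proof is correct, and its skeleton matches the paper's. You expand $t(C_4,G_n)$ and $t(P_3,G_n)$ in the $Z$-basis via \eqref{eq:decompC4} and \eqref{eq:decompA}. You note that the constant and $Z_{K_2}$ terms cancel exactly, since both carry $4p^3/(n)_2$. You then identify $2p^2Z_{P_3}/(n)_3$ as the leading term, with limit variance $8p^6(1-p)^2$.

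The genuine difference is how you dispose of the quadratic remainder $\delta_n^2$. The paper expands it term by term in the $Z$-basis: $Z_{K_2}^2$ via \eqref{eq:Z2decomp}, $Z_{P_3}Z_{K_2}$ via the sup-norm bound \eqref{eq:infL2}, and $Z_{P_3}^2$ via a case analysis on the overlap of the ranges of $\phi_1,\phi_2$, re-expanding repeated edge factors. It then checks each resulting coefficient against \eqref{eq:decompL2}.

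You replace all of this with one observation: $\delta_n^2\ge 0$ and $E\,\delta_n^2=\|\delta_n\|_2^2=O(n^{-2})$, so Markov's inequality gives $n^{3/2}\delta_n^2=o_P(1)$ at once. This also absorbs the deterministic pieces, such as the constant in \eqref{eq:Z2decomp}, that the paper tracks separately. It never needs a $Z$-representation of $Z_{P_3}^2$, which the paper says it does not have.

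What the paper's route buys is an exact decomposition of the remainder within its algebraic framework. That carries more information than this theorem needs, for example the terms of order $n^{-2}$ that matter for the bias correction in Remark~\ref{rem:bias}. For the first-order limit theorem, your delta-method argument with $L^2$ control is sufficient and much shorter. The same device would also streamline the proofs of Theorems~\ref{thm:mainP3}, \ref{thm:mainK3} and~\ref{thm:mainC4}, using $|t(K_2,G_n)-p|\le 1$ to bound the higher powers of $t(K_2,G_n)-p$ by a constant times its square.
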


\begin{proof} 
As in the previous proofs our strategy is to write $t(C_4,G_n)$ and $t(P_3,G_n)^2$ as linear combinations
of $Z_H$-variables; to order these according to their
magnitude in powers of $n$;  to obtain a cancellation of the first two terms; and finally to identify the
terms of order $n^{-3/2}$. Everything below $n^{-3/2}$ may be ignored. 

We know from the proof of Theorem~\ref{thm:mainC4} that
\begin{equation}
   t(C_4,\,\cdot\,) \;  = \;   
                                 \frac{4p^2}{(n)_3}\, Z_{P_3}
                                 \, +\,  \frac{4p^3}{(n)_2} \, Z_{K_2}
                                 \,+\, p^4  \; + \; o_P(n^{-3/2}).
\end{equation}	
In order to arrive at a similar representation for $t(P_3,G_n)^2$ we use the representation of $A(P_3,G)$
in~\eqref{eq:decompA} to obtain 
\begin{equation}
   t(P_3,\,\cdot\, )\; =\; \frac{1}{(n)_3}\,  Z_{P_3} \, +\,  \frac{2p}{(n)_2}\, Z_{K_2} \; + p^2.
\end{equation}
The terms in the square that involve the constant $p^2$ sum to
\begin{equation*}
\frac{2p^2}{(n)_3}\, Z_{P_3}
                                 \, +\,  \frac{4p^3}{(n)_2} \, Z_{K_2}
                                 \,+\, p^4.
\end{equation*}
If all other terms are $o_p(n^{-3/2})$ then it would follow that
\begin{equation*}
      n^{3/2}\bigl(t(C_4,G_n)-t(P_3,G_n)^2\bigr) - 2p^2 n^{3/2} Z_{P_3} \, = \, o_p(1),
\end{equation*}
which would yield the assertion as in the proof of Theorem~\ref{thm:mainC4}.

It remains to consider $Z_{P_3}^2$,  $Z_{P_3}Z_{K_2}$ and $Z_{K_2}^2$. The third of these
has been decomposed in the proof of Theorem~\ref{thm:mainP3}, which leads to
\begin{equation*}
\frac{1}{(n)_2 (n)_2}Z_{K_2}^2\, 
                                    = \ \frac{1}{(n)_2 (n)_2}\Big(Z_{K_2+K_2}+4Z_{P_3} +2p(1-2p)Z_{K_2}\Bigr)
                                           + \frac{2p(1-p)}{(n)_2}.
\end{equation*}
Using the $L^2$-bounds in \eqref{eq:decompL2} this is easily seen to be $o_P(n^{-3/2})$. Further, 
it follows from~\eqref{eq:infL2} that $((n)_3(n)_2)^{-1}Z_{P_3}Z_{K_2} =o_P(n^{-3/2})$.

We do not have a similar representation for the remaining $Z_{P_3}^2$ but instead go through the 
various terms that arise in the sum over $\phi_1\times\phi_2\in I(3,n)\times I(3,n)$, according 
to the size $k$ of the union of the range of $\phi_1$ and the range of $\phi_2$.

Obviously, the greatest value of $k$ to be considered is $k=6$, which arises if the two range sets
are disjoint, contributing $Z_{P_3+P_3}$ with $L_2$-norm $O(n^3)$. In view of the later division 
by $(n)_3(n)_3$ this term may be ignored asymptotically.
Suppose next that the functions $\phi_1$ and $\phi_2$ share exactly one value.
Each of the nine possibilities  results in a graph with five nodes, and we can proceed as before. 
If the range intersection has size two then we either
obtain a new edge (if $\phi_1(1)$ and $\phi_1(3)$ appear in the range of $\phi_2$) 
or an existing edge now appears twice. 
The former leads to the graph $H_1$, informally described as $K_3$ with a handle, with four possibilities,
and can be handled with the $L^2$-bound. The remaining possibilities each have a double edge $e_1$ 
with a (single) edge $e_2$ attached. For these we use 
\begin{equation*}
(1_{e_1}-p)^2 (1_{e_2}-p)\, = \, (1-2p)(1_{e_1}-p)(1_{e_2}-p) + p(1-p)(1_{e_2}-p).
\end{equation*}
This leads to overlap two contributions by $Z_{H_1}$,  $Z_{P_3}$ and  $(n-2)Z_{K_2}$ to the product,
again a case for the $L^2$-bound.  

It remains to consider the situation where all elements of the range of 
$\phi_1$ reappear in the range of $\phi_2$, with two possibilities for the three values
of $\phi_2$. Now we use a similar expansion for $(1_{e_1}-p)^2 (1_{e_2}-p)^2$
to identify this last bit with the linear combination 
$(1-2p)^2 Z_{P_3}+ 2(n-2) p(1-p)Z_{K_2} + (n)_3 p^2(1-p)^2$, 
which is again easily disposed of.
\end{proof}

The results add a distributional aspect to the quasirandomness property. The latter is based on
the equality criterion in the two uses of the Cauchy-Schwarz inequality in Section~\ref{subsec:quasi},
where $t(P_3,W)\ge t(K_2,W)^2$ respectively  $t(C_4,W)\ge t(P_3,W)^2$ are the basis for 
Theorem~\ref{thm:mainP3} and Theorem~\ref{thm:mainC4P3}. Neither of these
characterizes homogeneity; for example equality in the second case only implies that 
the function $u\mapsto \int_0^1 W(u,v)\, dv$ is almost surely constant. For homogeneous graphs
the corresponding fluctuations are related. We can obviously connect the various test statistics
through 
\begin{equation*}
\begin{split}
  t(C_4,G_n)-t(K_2,G_n)^4\ &= \ t(C_4,G_n)-t(P_3,G_n)^2\\
         &+\ \Bigl(t(P_3,G_n)-t(K_2,G_n)^2\Bigr)\Bigl(t(P_3,G_n)+t(K_2,G_n)^2\Bigr),
\end{split}
\end{equation*}
and it follows from the proofs that the leading terms in Theorem~\ref{thm:mainP3},
Theorem~\ref{thm:mainC4} and Theorem~\ref{thm:mainC4P3} are $Z_{P_3}$, $4p^2Z_{P_3}$
 and $2p^2Z_{P_3}$ respectively. The decomposition can also be used to obtain an alternative proof for
Theorem~\ref{thm:mainC4P3}.

\section{Two parametric families}\label{sec:altpar}
The argument given at the end of the proof of Theorem~\ref{thm:mainC4} can be applied
to the tests in the other results. 
For example, if  $W$ is a graphon with $t(P_3,W)>t(K_2,W)^2$ then 
the test $T_1$ in Theorem~\ref{thm:mainP3}  will asymptotically reject the hypothesis, i.e.
\begin{equation}\label{eq:loccons}
\lim_{n\to\infty} P_W\bigl(T_{1,n}\ge c(n,\alpha)\bigr) \, = \, 1.
\end{equation}
Similarly, \eqref{eq:loccons} holds for the test $|T_2|$ in Theorem~\ref{thm:mainK3} whenever
$W$ is such that 
$t(K_3,W)\not=t(K_2,W)^3$, and for the test $T_4$  in Theorem~\ref{thm:mainC4P3} if 
$t(C_4,W)>t(P_3,W)^2$. The converse, of course, need not be true: For a particular alternative $W$
we may have limit 1 for the power of the test even if equality holds in the respective condition.

The notion of consistency refers to the specific family $\cP$ underlying the statistical model. 
Theorem~\ref{thm:mainC4} is based on the full nonparametric family $\cW$
of all graphons. In order to further evaluate the other tests we now consider two popular parametric 
subfamilies; the null model $\cP_0$ continues to be the class of Erd{\H o{s-R\'enyi} graphs.
Thus, the limiting null distributions and the property of being similar are not affected by 
the reduction. The results are qualitative; see Section~\ref{sec:conclusion} 
for remarks on quantitative notions of efficiencies.

We first consider a class of simple stochastic block models,
based on the graphons $W=W(\,\cdot\,|\gamma,p_1,p_2)$ given by
\begin{equation}\label{eq:defSBM}
        W(x,y|\gamma,p_1,p_2) := 
                            \begin{cases}
                                      p_1, &\text{if } x\vee y\le \gamma \text{ or } x\wedge y>\gamma,\\
                                      p_2, &\text{otherwise.}
                            \end{cases}
\end{equation}
Here $0<\gamma<1$ defines the block size, $p_1$ is the within-blocks probability for an edge
(here assumed to be identical for the two blocks), and $p_2$ is the probability for edges between 
the two blocks. Note that we now have two sources of randomness; see the end of Section~\ref{subsec:graphon}. 
With $p_1=p_2$ the homogeneous case reappears. Indeed the family models a specific class
of alternatives that can be chosen to be arbitrarily close to elements of the hypothesis $H_0$ in~\eqref{eq:hyp}. 
Further, the extreme cases $p_1=1$, $p_2=0$, and $p_1=0$,  $p_2=1$, correspond to a graph that
consists of two cliques and a bipartite graph respectively.

\begin{proposition}\label{prop:SBM}
{\rm (a)} The test $T_1$ from Theorem~\ref{thm:mainP3} is consistent for the family $\cP\subset \cW$ 
of all block graphons  $W=W(\,\cdot\,|\gamma,p_1,p_2)$  with block size $\gamma\not=1/2$.

\vspace{.3mm}
{\rm (b)} The test $|T_2|$ from Theorem~\ref{thm:mainK3} is consistent for the family $\cP\subset \cW$ 
of all block graphons  $W=W(\,\cdot\,|\gamma,p_1,p_2)$  with $\gamma=1/2$, or with $\gamma\not=1/2$, 
$p_1\ge p_2$.

\vspace{.3mm}
{\rm (c)} The test $T_4$ from Theorem~\ref{thm:mainC4P3} is consistent for the family $\cP\subset \cW$ 
of all block graphons  $W=W(\,\cdot\,|\gamma,p_1,p_2)$  with block size $\gamma\not=1/2$.

\end{proposition}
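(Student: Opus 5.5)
The plan is to reduce each part to a strict inequality between graphon functionals, using the argument after the proof of Theorem~\ref{thm:mainC4} (see~\eqref{eq:loccons}). Concretely, it suffices to show the following for every block graphon $W=W(\,\cdot\,|\gamma,p_1,p_2)$ in the respective family that is an alternative, i.e.\ with $p_1\neq p_2$ (for $p_1=p_2$ we are on $H_0$):
\begin{itemize}
\item in (a), $t(P_3,W)>t(K_2,W)^2$;
\item in (b), $t(K_3,W)\neq t(K_2,W)^3$;
\item in (c), $t(C_4,W)>t(P_3,W)^2$.
\end{itemize}
Then the a.s.\ convergence~\eqref{eq:Wconv} forces the respective statistic to diverge to $+\infty$ (or $|T_{2,n}|\to\infty$), so the rejection probability tends to one.

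For (a) and (c) I would use the Cauchy--Schwarz identities from Section~\ref{subsec:quasi}, which express the differences as variances. With $d(u):=\int_0^1 W(u,v)\,dv$ we have $t(P_3,W)-t(K_2,W)^2=\var(d(U))$ for $U$ uniform. For the block graphon, $d$ takes the value $d_1=\gamma p_1+(1-\gamma)p_2$ on $[0,\gamma]$ and $d_2=(1-\gamma)p_1+\gamma p_2$ on $(\gamma,1]$. Hence
\begin{equation*}
\var(d(U))=\gamma(1-\gamma)(2\gamma-1)^2(p_1-p_2)^2,
\end{equation*}
which is positive exactly when $\gamma\neq 1/2$ and $p_1\neq p_2$. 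This gives (a) and also explains why $\gamma=1/2$ must be excluded there.

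Similarly, with the codegree function $c(u,w):=\int_0^1 W(u,v)W(w,v)\,dv$, we have $t(C_4,W)-t(P_3,W)^2=\var(c(U,U'))$. For the block graphon $c$ is piecewise constant:
\begin{itemize}
\item $c_{11}=\gamma p_1^2+(1-\gamma)p_2^2$ if both arguments lie in the first block;
\item $c_{22}=(1-\gamma)p_1^2+\gamma p_2^2$ if both lie in the second block;
\item $c_{12}=p_1p_2$ if they lie in different blocks.
\end{itemize}
Since $(c_{11}-c_{12})+(c_{22}-c_{12})=(p_1-p_2)^2>0$, the function $c$ is not a.s.\ constant. So the variance is positive, which gives (c); in fact this works for every $\gamma$.

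For (b) I would compute the densities directly. Put $s:=2\gamma(1-\gamma)\in(0,1/2]$. Then
\begin{equation*}
t(K_2,W)=(1-s)p_1+sp_2,\qquad t(K_3,W)=\bigl(1-\tfrac{3s}{2}\bigr)p_1^3+\tfrac{3s}{2}\,p_1p_2^2,
\end{equation*}
since three uniform points either all fall in one block, or two fall in one block and one in the other (probability $3s/2$); in the latter case the triangle has one within-block and two between-block edges. For $\gamma=1/2$ (so $s=1/2$) a direct expansion gives $t(K_3,W)-t(K_2,W)^3=(p_1-p_2)^3/8\neq0$. For $p_1>p_2$ and general $s$, write $p_2=p_1-\delta$ with $\delta>0$ and expand $g(\delta):=t(K_3,W)-t(K_2,W)^3$ in powers of $\delta$:
\begin{itemize}
\item the constant and linear terms cancel;
\item the quadratic term is $\tfrac{3s}{2}(1-2s)p_1\delta^2\ge0$;
\item the cubic term is $s^3\delta^3>0$.
\end{itemize}
Hence $g(\delta)>0$. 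The main obstacle is exactly this sign analysis in (b): it is a cubic polynomial identity rather than a variance. The expansion in $\delta$ is what makes the positivity transparent, and it also shows why the sign condition $p_1\ge p_2$ is needed when $\gamma\neq1/2$. For $p_1<p_2$ the quadratic and cubic terms can have opposite signs and cancel.
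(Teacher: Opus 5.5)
Your proof is correct. For (a) and (b) it is the paper's computation in a cleaner form. The paper evaluates $t(K_2,W)$, $t(P_3,W)$ and $t(K_3,W)$ from \eqref{eq:Wconv}. It finds $t(P_3,W)-t(K_2,W)^2=\gamma(1-\gamma)(1-2\gamma)^2(p_1-p_2)^2$, which is your $\var(d(U))$. It also finds $t(K_3,W)-t(K_2,W)^3=\gamma(1-\gamma)(p_1-p_2)^2 f(\gamma,p_1,p_2)$ with $f=3(1-2\gamma)^2p_1+8\gamma^2(1-\gamma)^2(p_1-p_2)$, so your bracket $\frac{3s}{2}(1-2s)p_1+s^3\delta$ is just $\frac{s}{2}f$. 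The paper then solves $f=0$ for $p_2/p_1$ (the denominator in \eqref{eq:ausnahmeT2} should read $8\gamma^2(1-\gamma)^2$), whereas your expansion in $\delta$ shows the sign directly.

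The genuine difference is in (c). The paper derives it from the non-constancy of the degree function $u\mapsto\int_0^1W(u,v)\,dv$ when $\gamma\neq 1/2$. This presupposes that $t(C_4,W)=t(P_3,W)^2$ forces a constant degree function, a step the paper asserts but does not prove. You work with the codegree function, which is exactly what the equality case of the relevant Cauchy--Schwarz step in Section~\ref{subsec:quasi} concerns. So your argument is self-contained, and it proves more: $T_4$ is consistent on the whole block family, $\gamma=1/2$ included. At $\gamma=1/2$ your computation gives $t(C_4,W)-t(P_3,W)^2=\frac14(c_{11}-c_{12})^2=(p_1-p_2)^4/16>0$. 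Hence the paper's remark after the proof, that this difference vanishes for $\gamma=1/2$, is wrong. The rejection rates of $T_4$ near one in the first part of Table~\ref{tab:power} are what your formula predicts.

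Your route in fact extends to all graphons. Let $\Lambda$ be the integral operator with kernel $W$ and $\mathbf{1}$ the constant function. If the codegree function equals a constant $\kappa$ a.s., then $\Lambda^2 f=\kappa\int_0^1 f$. Hence $\|\Lambda g\|_2^2=\langle \Lambda^2 g,g\rangle=0$ whenever $\int_0^1 g=0$. Self-adjointness then forces $\Lambda\mathbf{1}$, and hence every $\Lambda f=(\int_0^1 f)\,\Lambda\mathbf{1}$, to be constant. So $W$ is a.s.\ constant, and $T_4$ is already an omnibus test.

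One detail is worth adding to your reduction step. On an alternative, $t(K_2,W)$ is a strict convex combination of $p_1\neq p_2$ and therefore lies in $(0,1)$. So the studentizing denominators have positive limits, and the statistics really do diverge.
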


\begin{proof} (a) For the graphons $W(\,\cdot\,|\gamma,p_1,p_2)$ it is straightforward to calculate $t(H,W)$ from~\eqref{eq:Wconv} for small $H$ as a function of the parameters
$\gamma,p_1,p_2$,
\begin{align*}
      t(K_2,W)\; &= \; (1-2\gamma+2\gamma^2)\, p_1\, +\, 2\gamma(1-\gamma)\,p_2 ,\\
      t(P_3,W)\; &= \;  (1+3\gamma^2-3\gamma)\,p_1^2 \, + \, 
                                  \gamma(1-\gamma)\,p_2^2\,  +\,2\gamma(1-\gamma) p_1p_2.
\end{align*} 
This implies
\begin{equation}\label{eq:SBMT1}
   t(P_3,W)-t(K_2,W)^2 \, =\,  \gamma(1-\gamma)(1-2\gamma)^2 (p_2-p_1)^2,
\end{equation}
and the statement follows with the general remarks above, as the right hand side is greater than 0 
on the alternatives in the model under consideration.

(b) We now use 
\begin{equation*}
     t(K_3,W)\; =\;   (1-3\gamma+3\gamma^2)\,p_1^3\, 
                                                 +\, 3\gamma(1-\gamma)\,p_1p_2^2,
\end{equation*}
which leads to 
\begin{equation*}
          t(K_3,W)-t(K_2,W)^3 \, =\, \gamma(1-\gamma)(p_1-p_2)^2 f(\gamma,p_1,p_2) 
\end{equation*}
with
\begin{equation*}
        f(\gamma,p_1,p_2) \, :=\, 8\gamma^4(p_1-p_2)-16\gamma^3(p_1-p_2)+4\gamma^2(5p_1-2p_2)
                                                    +(3-12\gamma)p_1.
\end{equation*}
For $\gamma=1/2$ this  simplifies to 
\begin{equation*}
     t(K_3,W) - t(K_2,W)^3 \, = \, \frac{1}{8}(p_1-p_2)^3, 
\end{equation*}
and we may proceed as in the proof of (a), adapting the argument to the two-sided case.  For
$\gamma\not=1/2$ additional roots may appear through $f$. As this function is linear in $p_1$ for $\gamma$ and $p_2$ fixed the condition $f(\gamma,p_1,p_2)=0$ can easily be rewritten as
\begin{equation}\label{eq:ausnahmeT2}
      \frac{p_2}{p_1}\, =\, \frac{8\gamma^4-16\gamma^3+20\gamma^2-12\gamma+3}
                                                     {8\gamma^4(1-\gamma)^2}
                                 \, =\, 1+\frac{3(1-2\gamma)^2}{8\gamma^4(1-\gamma)^2}.
\end{equation}
In view of $\gamma\not=1/2$ this can only happen if $p_1<p_2$.

(c) From the final remark in Section~\ref{subsec:quasi} we know that this follows from the fact
that $u\mapsto\int_0^1 W(u,v|\gamma,p_1,p_2)\, dv$ is not constant if $\gamma\not=1/2$
and $p_1\not=p_2$.  
\end{proof}

We can use the equations in the proof to find graphons that are not almost surely 
constant but nevertheless satisfy $t(P_3,W)-t(K_2,W)^2=0$ or $t(K_3,W)-t(K_2,W)^3 = 0$.
For example, it follows from~\eqref{eq:SBMT1} that the first equality is always satisfied if the blocks 
are of equal size, i.e.\ $\gamma=1/2$.
For the second  specific solutions can be obtained from~\eqref{eq:ausnahmeT2}.
In connection with $T_4$ we note that $t(C_4,W)-t(P_3,W)^2=0$ holds 
if $u\mapsto\int_0^1W(u,v)\, dv$ is almost surely
constant on the unit interval. This is easily seen to be the case with $W(\,\cdot\,|\gamma,p_1,p_2)$
whenever $\gamma=1/2$.

In  the second example we consider the graphons $W(\,\cdot\,|\beta,p)$, $0<\beta\le 1$, $0<p<1$, 
with $W(x,y|\beta,p)= p$ if $|x-y|\le \beta$ and $W(x,y|\beta,p)=0$ otherwise. 
These are also known as (linear) band graphons. 
Note that the homogeneous case now corresponds to $\beta=1$. 
For this family the two steps in the graphon construction are easily separated: Through the
parameter $\beta$ the  first $n$ values of the $U$-sequence determine the set of potential pairs, 
then each of these pairs is randomly connected by an edge, independently and with probability equal to 
the second parameter~$p$. 

\begin{proposition}\label{prop:linBG}
The tests $T_1$, $T_2$ and $T_4$ from Theorems~\ref{thm:mainP3}, \ref{thm:mainK3}, 
\ref{thm:mainC4P3} respectively are all 
consistent for the family $\cP\subset \cW$ of band graphons  $W=W(\,\cdot\,|\beta,p)$, $0<\beta\le 1$,
$0<p<1$.
\end{proposition}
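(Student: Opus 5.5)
The plan is to use the general principle stated at the start of Section~\ref{sec:altpar}. By the almost sure convergence \eqref{eq:Wconv}, each of the three tests rejects with asymptotic probability one as soon as the corresponding limiting inequality is strict:
\begin{itemize}
\item $t(P_3,W)>t(K_2,W)^2$ for $T_1$;
\item $t(K_3,W)\not=t(K_2,W)^3$ for $|T_2|$;
\item $t(C_4,W)>t(P_3,W)^2$ for $T_4$.
\end{itemize}
So it suffices to verify these three strict relations for every band graphon $W=W(\,\cdot\,|\beta,p)$ with $0<\beta<1$, $0<p<1$. The case $\beta=1$ is the hypothesis and is excluded from the alternatives. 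For $T_1$ and $T_4$ I would argue through the equality cases of the two Cauchy--Schwarz steps in Section~\ref{subsec:quasi}. For $T_2$ I would compute the densities explicitly.

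For $T_1$, the Cauchy--Schwarz step shows that $t(P_3,W)=t(K_2,W)^2$ forces the degree function
\[
d(u)=\int_0^1 W(u,v)\,dv=p\,\lambda\bigl([u-\beta,u+\beta]\cap[0,1]\bigr)
\]
to be a.e.\ constant. But $d$ is continuous, with $d(0)=p\beta$ and $d(1/2)=p\min(1,2\beta)>p\beta$ for $\beta<1$. So $d$ is not a.e.\ constant and the inequality is strict.

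For $T_4$, equality in the first Cauchy--Schwarz step forces the codegree
\[
c(u_1,u_3)=\int_0^1W(u_1,v)W(u_3,v)\,dv
\]
to be a.e.\ constant on $[0,1]^2$. This function is continuous in $(u_1,u_3)$, since it equals $p^2$ times the length of an intersection of intervals. On the diagonal it equals $d(u)\,p$, so it takes the distinct values $p^2\beta$ at $(0,0)$ and $p^2\min(1,2\beta)$ at $(1/2,1/2)$. By continuity these values persist on sets of positive measure, so $c$ is not a.e.\ constant and $t(C_4,W)>t(P_3,W)^2$. (Alternatively, the remark after Theorem~\ref{thm:mainC4P3} only needs the weaker consequence that equality forces $d$ to be a.e.\ constant, which we have just excluded.)

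For $T_2$, which I expect to be the only real computation, I would use explicit formulas. First, $t(K_2,W)=p\,(1-(1-\beta)^2)=p\,\beta(2-\beta)$. Second, $t(K_3,W)=p^3\,P(R_3\le\beta)$, where $R_3$ is the range of three i.i.d.\ uniforms. Its density is $6r(1-r)$, so $t(K_3,W)=p^3(3\beta^2-2\beta^3)$. The factor $p^3$ cancels, and after dividing by $\beta^2$ it remains to compare $3-2\beta$ with $\beta(2-\beta)^3$. Substituting $s=1-\beta\in(0,1)$, these become $1+2s$ and
\[
(1-s)(1+s)^3=1+2s-2s^3-s^4<1+2s.
\]
Hence $t(K_3,W)>t(K_2,W)^3$ strictly, so even the one-sided version would detect these alternatives. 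The main point needing care is the range-distribution calculation; the rest is the general consistency argument from the proof of Theorem~\ref{thm:mainC4}.
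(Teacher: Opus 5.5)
Your proof is correct. For $T_2$ it follows the paper, but for $T_1$ and $T_4$ it takes a different route.

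\textbf{$T_1$.} The paper integrates the degree function piecewise and gets $t(P_3,W)$ as a cubic in $\beta$ on $(0,\tfrac12]$ and on $(\tfrac12,1]$. It then checks that the difference with $t(K_2,W)^2$ has no zero. You use only the equality case of the Cauchy--Schwarz step, continuity of $d$, and the two values $d(0)\neq d(\tfrac12)$. Your version is shorter and avoids piecewise algebra, and it applies verbatim to any graphon whose degree function is continuous and non-constant. The paper's computation, in exchange, gives the exact size of the gap: $p^2\beta^3(\tfrac23-\beta)$ for $\beta\le\tfrac12$ and $p^2(1-\beta)^3(\beta-\tfrac13)$ for $\beta>\tfrac12$. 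This shows the signal vanishes cubically as $\beta\to1$. The polynomial printed in the paper for the first range contains a slip, but the conclusion is unaffected.

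\textbf{$T_2$.} You use the same range-of-three-uniforms computation as the paper. You also carry out the comparison that the paper leaves as ``proceed as before'', obtaining $t(K_3,W)-t(K_2,W)^3=p^3\beta^2(2s^3+s^4)>0$ with $s=1-\beta$. This additionally shows that the one-sided test would suffice for this family.

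\textbf{$T_4$.} The paper only refers back to the block-model argument, i.e.\ to non-constancy of the degree function. That argument tacitly assumes that equality in $t(C_4,W)\ge t(P_3,W)^2$ forces $d$ to be constant, which the paper does not justify. Your codegree argument proves strictness of exactly that Cauchy--Schwarz step, so it is self-contained. It is also the natural condition to check. If the codegree is a.e.\ constant, the square of the integral operator of $W$ has constant kernel, and self-adjointness then forces $W$ itself to be a.e.\ constant. Your parenthetical alternative inherits the paper's unjustified step, so keep the codegree argument as the main one.
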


\begin{proof}
For $T_1$ we proceed as in the proof of Proposition~\ref{prop:SBM}. With $W=W(\,\cdot\,|\beta,p)$
we now get 
\begin{align*}
        t(K_2,W) \ &=\ p\, \beta(2-\beta),\\
        t(P_2,W)\ &=\ p^2\begin{cases}
                                 4\beta^2-\frac{10}{3}\beta^3 &\text{ for } 0\le\beta\le 1/2,\\
                                 -\frac{1}{3}+2\beta -\frac{2}{3}\beta^3, &\text{ for } 1/2<\beta\le 1.
      \end{cases}
\end{align*}
For $t(P_3,W)-t(K_2,W)^2=0$, assuming that $p>0$, we need a solution $\beta$ of
\begin{equation*}
4-\frac{22}{3}\beta  + \beta^2 = 0\quad\text{or}\quad  
                -\frac{1}{3}+2\beta-4\beta^2+\frac{10}{3}\beta^3-\beta^4= 0
\end{equation*}
in the interval $[0,\frac{1}{2}]$ (left equation) or in the interval $(\frac{1}{2},1]$ (right equation).
There are none, as the first polynomial factors into $\frac{1}{3}(\beta-6)(3\beta-4)$, 
and the second into $(1-\beta)^3(\beta-\frac{1}{3})$.

Evaluating $t(K_3,W)$ via the multiple integrals in~\eqref{eq:Wconv} is somewhat tedious. Instead we note
that, with $u,v,w$ the values of $U_i,U_j,U_k$, these indices are candidates for a triangle 
if and only if $|u-v|\vee|v-w|\vee |w-u|\le \beta$, which is equivalent to the span of the three $U$-variables 
being less than or equal to $\beta$. The known connection to beta distributions leads to 
$t(K_3,W)  =   p^3 \beta^2(3-2\beta)$, and we can proceed 
as before.

For $T_4$ we use the same argument as in the proof of Proposition~\ref{prop:SBM}.
\end{proof}

For graphons with values $0$ and $1$ only, such as in the stochastic block model with  $p_1=0$, $p_2=1$ 
or $p_1=1$, $p_2=0$, or in the linear band model with $p=1$, the subgraph 
counts can be related to classical $U$-statistics for symmetric kernels. For example, counting 
triangles in the second model leads to
\begin{equation*}
   A(K_3,G_n) = \sum_{(i,j,k)\in I(3,n)} h(U_i,U_j,U_k)
\end{equation*}
 where the kernel $h$ (of order three) is given by
\begin{equation*}
    h(u,v,w)= \begin{cases}
                               1, &\text{ if } |u-v| \vee |v-w| \vee |w-v| \le \beta,\\
                               0, &\text{otherwise}.
                     \end{cases}
\end{equation*}
This is implicit in the order-statistics argument used in the above proof. 

\section{Simulations}\label{sec:sim}
The results in the previous  sections refer to the asymptotic behavior of the tests
as the number of graph vertices grows to infinity. Also,
consistency is an entirely qualitative property. For applications the power (rejection rate)  
of the tests at specific alternatives and the behavior of the tests for finite graphs is of major interest.  
Here we present some related simulation results and leave theoretical  investigations to future research; 
see also Section~\ref{sec:conclusion}.

A direct implementation of the sum over a set $I(k,n)$ for fixed $k$ leads to about $n^k$ individual terms.
On first sight it therefore seems that the use of $C_4$ in $T_3$ comes with a major  computational drawback.
Fortunately, as is well known, the counts $A(H,G)$ can in some cases
be related to powers of  the adjacency matrix  $M=M(G)$ of $G\in \bG_n$. This may reduce
the computational burden considerably. Writing $\Sigma(M)$ for the sum of all entries of $M$ we have
\begin{equation*}
   A(K_2,G)= \Sigma(M),\ A(P_3,G)= \Sigma(M^2)- \trace(M^2),\ A(K_3,G)= \trace(M^3),
\end{equation*} 
and finally that 
\begin{equation*}
   A(C_4,G)\ =\  2\,\sum_{i=1}^{n-1}\sum_{j=i+1}^n (M^2)_{ij}\bigl((M^2)_{ij}-1\bigr).
\end{equation*} 
In particular, the computation of the statistics appearing in Section~\ref{sec:results} can all be done with 
at most two matrix multiplications. For these efficient implementations exist.

\begin{table} 
	\caption{Rejection rates for the stochastic block model,\\ 
                  with $p_1=0.5+\beta$, $p_2=0.5-\beta$, $n=100$ and 10000 repetitions.\\
                  All values rounded to multiples of $10^{-3}$.\\[1mm]
                                                                                  }\label{tab:power}
{
\renewcommand{\arraystretch}{1.02} 
\setlength{\tabcolsep}{6.1pt}     

	\begin{tabular}{ccccccccccccc}	
		$\beta$ && -0.4 &  -0.2 & -0.1 &-0.05 & 0.0 & 0.05 & 0.1  & 0.2 & 0.4 \\  
		\noalign{\vspace{.1mm}}
        \noalign{\hrule}

		\noalign{\vspace{1.3mm}}
		$T_1$  && 0.249 &  0.132 &  0.055 &  0.046 &  0.043 &  0.047 &  0.055 &  0.132 &  0.239 \\
        $T_1\bc$ &&  0.254 &  0.141 &  0.067 &  0.06 &  0.054 &  0.059 &  0.068 &  0.142 &  0.245  \\
		$T_2$     && 0.0 &  0.0 &  0.002 &  0.032 &  0.044 &  0.063 &  0.436 &  1.0 &  1.0 \\ 
		$|T_2|$  && 1.0 &  1.0 &  0.443 &  0.059 &  0.051 &  0.054 &  0.325 &  1.0 &  1.0  \\ 
		$T_3$  && 1.0 &  0.991 &  0.068 &  0.039 &  0.036 &  0.041 &  0.079 &  0.950 &  1.0 \\                        
        $T_3\bc$ && 1.0 &  1.0 &  0.092 &  0.059 &  0.052 &  0.054 &  0.103 &  0.969 &  1.0   \\
         $T_4$  && 1.0 &  1.0 &  0.092 &  0.034 &  0.034 &  0.037 &  0.110 &  1.0 &  1.0\\ 
	\end{tabular}
}
\begin{center}
(a) \textit{The symmetric case $\gamma=0.5$}.
\end{center}
\vspace{2mm}

{
\renewcommand{\arraystretch}{1.02} 
\setlength{\tabcolsep}{6.1pt}     

	\begin{tabular}{ccccccccccccc}	
		$\beta$ && -0.4 &  -0.2 & -0.1 &-0.05 & 0.0 & 0.05 & 0.1  & 0.2 & 0.4 \\  
		\noalign{\vspace{.1mm}}
        \noalign{\hrule}

		\noalign{\vspace{1.3mm}}
		$T_1$  &&0.801 &  0.659 &  0.230 &  0.083 &  0.043 &  0.085 &  0.285 &  0.645 &  0.802  \\
        $T_1\bc$ &&  0.805 &  0.673 &  0.319 &  0.102 &  0.054 &  0.102 &  0.314 &  0.662 &  0.805      \\		
        $T_2$  && 0.0 &  0.010 &  0.049 &  0.054 &  0.044 &  0.107 &  0.680 &  1.0 &  1.0  \\ 
		$|T_2|$  &&  1.0 &  0.966 &  0.257 &  0.062 &  0.051 &  0.078 &  0.592 &  1.0 &  1.0 \\ 
		$T_3$  &&1.0 &  0.999 &  0.272 &  0.070&  0.036 &  0.080 &  0.369 &  0.990 &  1.0 \\                        
        $T_3\bc$ && 1.0 &  0.999 &  0.320 &  0.096 &  0.052 &  0.108 &  0.419 &  0.995 &  1.0  \\
         $T_4$  &&  1.0 &  1.0 &  0.256 &  0.057 &  0.031 &  0.077 &  0.450 &  1.0 &  1.0 \\
	\end{tabular}
}
\begin{center}
(a) \textit{An unymmetric case, with  $\gamma=0.4$}.
\end{center}
\end{table}

In our first simulation experiment we consider the simple stochastic block models
discussed in Section~\ref{sec:altpar}. 
Table~\ref{tab:power} shows the results for simulated data from $W(\,\cdot\,|\gamma,1/2-\beta,1/2+\beta)$
with $\gamma=0.5$ in its first and $\gamma=0.4$ in its second part, both
for several values of $\beta$, with $n=100$ nodes, level $\alpha=0.05$, and 10000 Monte Carlo repetitions. 
The numbers support the following points:
\begin{itemize}
\itemsep=2pt
\item[--]{the false rejection rates
                are at the specified level $\alpha$, up to random fluctuations (see the column with $\beta=0$),}
\item[--]{for $T_2$, one should use the two-sided test (see Remark~\ref{rem:sided}),}
\item[--]{bias correction improves the performance (see Remark~\ref{rem:bias}),}
\item[--]{in cases where the local consistency condition is not satisfied, the rejection rate may still  
                be close to 1 (as in the case of $T_4$), or far from it (as in the case of $T_1$). }
\end{itemize}
Further, comparing the numbers for $T_1$ 
in the symmetric and unsymmetric case supports the results in Proposition~\ref{prop:SBM}.
Overall the two-sided test based on triangles shows the best performance, especially so in
the symmetric case. It is, however, well-known that the omnibus property (consistency against all
alternatives) usually comes at the price of reduced power, specifically for those
alternatives that are tailor-made for a specific test.

\begin{table} 
	\caption{Rejection rates for the band graphon\\ 
                    with band width $\beta$, $p=0.5$, $n=100$, and 10000 repetitions. \\
                    All values rounded to multiples of $10^{-3}$.}\label{tab:powerBand}
{
\renewcommand{\arraystretch}{1.02} 
\setlength{\tabcolsep}{6.1pt}     

	\begin{tabular}{cccccccccc}	
		$\beta$ && 0.05 & 0.1 & 0.3 &  0.5 & 0.7 & 0.9 & 0.95 & 1.0 \\  
		\noalign{\vspace{.1mm}}
        \noalign{\hrule}

		\noalign{\vspace{1.3mm}}
 $T_1$ 	 &&  0.166 &  0.320 &  0.980 &  1.0 &  0.992 &  0.102 &  0.050 &  0.046 \\
 $T_1\bc$ 	 &&  0.182 &  0.342 &  0.983 &  1.0 &  0.994 &  0.124 &  0.064&  0.059\\
 $T_2$ 	 &&  1.0 &  1.0 &  1.0 &  1.0 &  0.991 &  0.093 &  0.048 &  0.045\\
 $|T_2|$ 	 &&  1.0 &  1.0 &  1.0 &  1.0 &  0.985 &  0.072 &  0.053 &  0.053\\
 $T_3$ 	 &&  1.0 &  1.0 &  1.0 &  1.0 &  0.989 &  0.086 &  0.043 &  0.038\\
 $T_3\bc$ 	 &&  1.0 &  1.0 &  1.0 &  1.0 &  0.992 &  0.116 &  0.061 &  0.055\\
 $T_4$ 	 &&  1.0 &  1.0 &  1.0 &  1.0 &  0.982 &  0.072 &  0.034 &  0.032 \\
    \end{tabular}
}
\end{table}
Table~\ref{tab:powerBand} shows the simulation results for the band graphon with $p=1/2$ and various 
$\beta$-values, with  $n$, $\alpha$ and the number of repetitions as in the previous experiment. 
Note that now the right-most column corresponds to the hypothesis. In this model the rejection rates for the
bias-corrected consistent test are  slightly higher than for the test that uses triangles. As it befits an
experiment, the numbers generate new questions, for example the relevance of the two-stage randomness
in connection with the behavior of $T_1$ in the band graphon model for narrow bands.

\section{Conclusion and outlook}\label{sec:conclusion}

In the context of the limit theory for dense graphs the notion of quasirandomness, originally referring to 
deterministic sequences, leads to a characterization of a specific set of limit points; see~\cite{JansonQuasi}
for an in-depth treatment of this connection. Of course, the use of 
characterizations of probability distributions to obtain consistent  goodness-of-fit tests has a long 
tradition in the classical case where the data set is a sample from some distribution. The tests here 
are based on comparing subgraph counts, other quasirandomness results should lead to alternative 
procedures. Also, there are analogues of the limit theory for dense graphs for other discrete structures. 
Binary trees, for example, have been investigated in~\cite{EGW2} along lines parallel to the 
graph/graphon  situation.


The connection to permutations and independence tests has briefly been mentioned in the introduction.
We now expand on this, partly because the analogy may be of interest in its own right, but also because it may 
lead to (statistical) applications for other discrete structures.

Suppose that $(Z_n)_{n\in\bN}$ is a sequence of independent and identically distributed two-dimensional 
random vectors with continuous marginal distribution functions. 
The first $n$ of these define a rank plot which we rescale in 
order to get a discrete subset of the unit square. With $n\to \infty$ we then obtain a copula, which in 
discrete mathematics is known as a permuton. The components of the $Z$-vectors are independent 
if and only if  this limit is the independence copula, or uniform permuton. Reordering the $Z$-values, which we may 
regard as an action of $\bS_n$ on $Z_1,\ldots.Z_n$, leads to the same rank plot. As reordering the sample values 
is statistical irrelevant in this context we pass on to the `unlabeled' rank plot.
In contrast to the graph situation, there is now a canonical choice of representative, for example
via the linear order of one of the component values. Also, while the symmetric group acts on the `raw' 
objects in both cases, the equivalence classes are  all of the same size in the rank situation, whereas
the transition from labeled to unlabeled graphs is considerably more complicated. 

%

Another difference is in the `permuton construction', meaning the construction of a sequence 
$(\Pi_n)_{n\in\bN}$ of random permutations that converges almost surely to a given permuton (or copula)
$C$ in the topology defined by convergence of pattern frequencies:
We sample $Z_1,Z_2,\ldots$ from the distribution with distribution function $C$ and take $\Pi_n$ to be the 
canonical representation of the rank plot associated with the first $n$ of these. Thus only one stage 
of randomness is needed, whereas in the graph situation
there are two. Loosely speaking, the first of these 
selects the position
of the nodes in the limiting adjacency matrix $W$, and the second then decides, based on the first step, 
whether or not an edge appears between two nodes. The recent paper~\cite{KaurRoellin} deals with a variant 
where the first step is replaced by a deterministic procedure, leading to lattice type random graphs.

The analogy to ranks and permutations suggests the transfer of known results for the latter to
the graph situation considered here. Notably, classical efficiency concepts for the sample situation, such as those
related to the names of Bahadur, Pitman, Hodges and Lehmann, once transferred to the graphon situation, should
lead to quantitative comparisons of the above tests that go beyond simulations; see~\cite{BaGrEJS3} for such 
results for tests of independence based on pattern counts. Efficiency concepts are often based on large deviation
results. 
In the permutation
situation the theoretical efficiency results in \cite{BaGrEJS3} make use of large deviation results  for 
$U$-statistics. The theory of  large deviations for random graphs is a more recent development,  
see~\cite{Chatter} for a self-contained introduction and large deviation results
for Erd{\H os}-R\'enyi graphs.

Finally, substructure counts could possibly be 
used in the two-sample situation, meaning here a test of the hypothesis that two graphs are generated 
from the same graphon; see~\cite{BaGrBernoulli2025} for the permutation situation. This could be interpreted as
a statistical version of the graph isomorphism problem. The transfer would now require the extension
of results on distributional limits from the situation considered here to general graphons, again an area of 
current research.

\bibliographystyle{alpha } 

\begin{thebibliography}{}

\bibitem{BaGrBernoulli2025} {\sc Baringhaus, L., Gr\"ubel, R.} (2025)
  Pattern-based tests for two-dimensional copulas.
  {\em Bernoulli} 31, 3034--3059.

\bibitem{BaGrEJS3} {\sc Baringhaus, L., Grübel, R.} (2026)
Efficiency of pattern-based independence tests.
{\em Electron. J. Stat.} 20 (1), 1907-1942.
	
\bibitem{BergsmaDassios} {\sc Bergsma, W., Dassios, A.} (2014)
A consistent test of independence based on a sign covariance related to {Kendall}'s tau. 
{\em Bernoulli} 20, 1006--1028.

\bibitem{BFJ}
{\sc Brune, B., Flossdorf, J.  and Jentsch, C.} (2023)
 Goodness-of-fit testing based on graph functionals for homogeneous {Erd{\H{o}}s}-{R{\'e}nyi} graphs.
 {\em Scand. J. Stat.} 52, 332--380.

\bibitem{Chan}{\sc Chan, T. F. N., Kr{\'a}l, D., Noel, J. A.,  Pehova, Y.,	Sharifzadeh, M., Volec, J.} (2020)
Characterization of quasirandom permutations by a pattern sum.
{\em Random Struct. Algorithms} 57, 920--939.

\bibitem{Chatter}{\sc Chatterjee, S.} (2017) {\em Large deviations for random graphs}.
 Lecture Notes in Mathematics 2197. École d’Été de Probabilités de Saint-Flour. 
Springer, Cham. 

\bibitem{CGW}{\sc Chung, F.R.K., Graham, R.L., Wilson, R.M.} (1989)
Quasi-random graphs.  {\em Combinatorica} 9, 345--361.

\bibitem{EGW2}
{\sc Evans, S.N., Gr{\"u}bel, R., Wakolbinger, A.} (2017)
Doob--{M}artin boundary of {R}{\'e}my's tree growth chain.
{\em Ann. Probab.} 45,  225--277.

\bibitem{Hoeffding} {\sc Hoeffding, W.} (1948)
A non-parametric test of independence.
{\em Ann. Math. Stat.} 19, 546--557.

\bibitem{JansonGHS} {\sc Janson, S.} (1997)
{\em Gaussian Hilbert spaces}. Cambridge University Press, Cambridge.

\bibitem{JansonQuasi} {\sc Janson, S.}(2011) 
Quasi-random graphs and graph limits.
{\em Europ. J. of Combinatorics} 32, 1054–-1083

\bibitem{JLR} {\sc Janson, S., {\L}uczak, T. and Ruci{\'n}ski, A.} (2000)
 {\em Random graphs}.
Wiley, New York. 

\bibitem{KaurRoellin}{\sc Kaur, G. and Röllin, A.}(2021)
Higher-order fluctuations in dense random graph models. 
{\em Electron. J. Probab.} 26, 1--36. 

\bibitem{Lovasz} {\sc Lov{\'a}sz, L.},
 {\em Large networks and graph limits} (2012)
 Colloq. Publ., Am. Math. Soc. 60.

\bibitem{Oua} {\sc Ouadah, S., Robin, S. and Latouche, P.} (2020)
 Degree-based goodness-of-fit tests for heterogeneous random graph models: independent and 
exchangeable cases.
 {\em Scand. J. Stat.} 47, 156--181.


\bibitem{Yana}
{\sc Yanagimoto, T.} (1970) On measures of association and a related problem.
{\em Ann. Inst. Stat. Math.} 22, 57--63.

\bibitem{Zhao} {\sc Zhao, Yufei} (2023) 
{\em Graph Theory and Additive Combinatorics—Exploring Structure and Randomness.}
Cambridge University Press, Cambridge.

\end{thebibliography}
{}

\end{document}